\begin{document}
\everymath{\displaystyle}

 \onehalfspacing

\title[]{
Nondegeneracy of positive bubble solutions for generalized energy-critical Hartree equations}

\author[]{Xuemei Li}
\address{\hskip-1.15em Xuemei Li
	\hfill\newline Laboratory of Mathematics and Complex Systems,
\hfill\newline Ministry of Education,
\hfill\newline School of Mathematical Sciences,
\hfill\newline Beijing Normal University,
\hfill\newline Beijing, 100875, People's Republic of China.}
\email{xuemei\_li@mail.bnu.edu.cn}

\author[]{Chenxi Liu}
\address{\hskip-1.15em Chenxi Liu
	\hfill\newline Laboratory of Mathematics and Complex Systems,
\hfill\newline Ministry of Education,
\hfill\newline School of Mathematical Sciences,
\hfill\newline Beijing Normal University,
\hfill\newline Beijing, 100875, People's Republic of China.}
\email{liuchenxi@mail.bnu.edu.cn}

\author[]{Xingdong Tang}
\address{\hskip-1.15em Xingdong Tang
	\hfill\newline School of Mathematics and Statistics, \hfill\newline Nanjing Univeristy of Information Science and Technology,
	\hfill\newline Nanjing, 210044,  People's Republic of China.}
\email{txd@nuist.edu.cn}

\author[]{Guixiang Xu}
\address{\hskip-1.15em Guixiang Xu
	\hfill\newline Laboratory of Mathematics and Complex Systems,
	\hfill\newline Ministry of Education,
	\hfill\newline School of Mathematical Sciences,
	\hfill\newline Beijing Normal University,
	\hfill\newline Beijing, 100875, People's Republic of China.}
\email{guixiang@bnu.edu.cn}

\subjclass[2010]{ Primary 35B09, 47A74. Secondly 35P10, 42B37}

\keywords{Bubble solution; Funk-Hecke formula;  Hartree equation; Nondegeneracy;  Spherical harmonic functions; Stereographic projection}

\begin{abstract}
In this paper,   we show the nondegeneracy of positive bubble solutions  for generalized energy-critical
Hartree equations (NLH)
\begin{equation*}
  -{\Delta u}\sts{x}
  -{\bm\alpha}\sts{N,\lambda}
  \int_{\R^N}
  { \frac{ u^{p}\sts{y}}{\pabs{\,x-y\,}{\lambda}} }\diff{y}\,
  u^{p-1}\sts{x}
  =0,\quad x\in \R^N
\end{equation*}
where  $N\geq 3$, $0<\lambda<N$, $p=\frac{2N-\lambda}{N-2}$ and ${\bm\alpha}\sts{N,\lambda}$ is a normalized constant such that $ u(x)=\left(1+|x|^2\right)^{-\frac{N-2}{2} }$ is a bubble solution of the equation \eqref{NLH}.  It solves an open nondegeneracy problem in \cite{MWX:Hartree,  GMYZ2022cvpde} and generalizes the partial nondegeneracy results  in \cite{DY2019dcds, GWY2020na, LTX2021} to the full range $0<\lambda<N$.  The key observation is that  by use of  the stereographic projection $\mathcal{S}$,  the weighted pushforward map $\mathcal{S}_*$  is  one-to-one map between the null space of the linearized operator and the spherical harmonic function subspace $\mathcal{H}_1^{N+1}$ of degree one.

\end{abstract}

\maketitle


\section{Introduction}
\label{sec:introduction}
In this paper we consider the generalized energy-critical Hartree equations in $\R^N$
\begin{equation}
\tag{NLH}
\label{NLH}
  -{\Delta u}\sts{x}
  -{\bm\alpha}\sts{N,\lambda}
  \int_{\R^N}
  { \frac{ u^{p}\sts{y}}{\pabs{\,x-y\,}{\lambda}} }\diff{y}\,
  u^{p-1}\sts{x}
  =0,\quad x\in \R^N
\end{equation}
where $u$ is a real-valued function, $N\geq 3$, $0<\lambda<N$, $p=\frac{2N-\lambda}{N-2}$ and
the normalized constant \begin{equation}
\label{alpha}
  {\bm\alpha}\sts{N,\lambda}
  = \frac{ N\sts{N-2}}{\pi^{\frac{N}{2}}} \cdot 
\frac{
   \fct{\Gamma}{N-\frac{\lambda}{2}}
}{
    \fct{\Gamma}{\frac{N-\lambda}{2}}
}.
\end{equation}
The equation \eqref{NLH} is left invariant under the scaling transform
\begin{equation*}
u(x) \longmapsto u_\delta(x)=\delta^{\frac{N-2}{2}} u(\delta x),
\end{equation*}
which preserves the $\dot H^1(\R^N)$ norm. That is the reason why the equation \eqref{NLH} is called the energy-critical equation.

The equation \eqref{NLH},
which is also called nonlinear Choquard or Choquard-Pekar equation,
has several physical motivations.
In the subcritical case $N=3,$ $p=2$ and $\lambda=1$, the equation \eqref{NLH}
firstly appeared
in the context of Fr\"ohlich and Pekar's polaron model,
which describes the interaction between one single electron and the dielectric polarisable continuum, see \cite{FF1937PRSL,F1954AP,AM1996PB,Pekar1963}.
Later, Choquard proposed the equation \eqref{NLH}  to describe
an approximation to the Hartree-Fock theory of a plasma, and then attracted the substantial attention in the field of nonlinear elliptic equations, see \cite{Lieb1977,Lions1980na,VJ2017jfpta}. The equation \eqref{NLH} also arises as a model problem in the study of stationary solutions to nonlinear Schr\"odinger equation with nonlocal nonlinearity:
\begin{equation}\label{gNLH}
  i u_t-{\Delta u}
  -{\bm\alpha}\sts{N,\lambda}
  \int_{\R^N}
  { \frac{\abs{u\sts{y}}^{p}}{\pabs{\,x-y\,}{\lambda}} }\diff{y}\,
 \abs{u}^{p-2} u
  =0, \quad p\geq 2.
\end{equation}
Physically, the equation
\eqref{gNLH} effectively describes the mean field limit of
quantum many-body systems, see e.g., \cite{FYY2002cmp,LNR2014am,FL2003}, and references therein.

The existence and uniqueness of positive bubble solutions of the equation \eqref{NLH} has been known for some time, see e.g., \cite{DY2019dcds,GY2018,GHPS2019cvpde,MWX:Hartree,MXZ09: rad,VJ2017jfpta} and references therein.

The existence of bubble solutions of the equation \eqref{NLH}
is closely related to the sharp \HLS inequality in $\R^N$
\begin{align}\label{HLS:quote}
\sts{
\rint\rint
\frac{
    \pabs{\fct{f}{x}}{p}\pabs{\fct{f}{y}}{p}
}{
    \pabs{x-y}{\lambda}
}\diff{x}\diff{y}
}^{\frac{1}{p}}
\leq\;  \Const{C}\sts{N,\lambda}
  \norm{\nabla f}{L^2(\R^N)}^2,
\end{align}
where $N\geq 3$, $0<\lambda<N$ and $p=\frac{2N-\lambda}{N-2}$. In fact, by use of the  sharp \HLS inequality and the sharp \Sb inequality in $\R^N$ in \cite{lieb2001analysis}, Miao, Wu and the fourth author firstly showed  that the sharp constant
$\Const{C}\sts{N,\lambda}$ in \eqref{HLS:quote} is obtained in the classical energy-critical case $p=2$, $\lambda=4<N$ if and only if
\begin{equation}\label{f}
\fct{f}{x}=c\cdot \sts{\delta^2+ \pabs{x-x_0}{2}}^{-\frac{N-2}{2}}
\end{equation}
for some $c\in\R\setminus\{0\},~\delta>0$ and $x_0\in\R^N$ in \cite{MWX:Hartree}. Later, Du, Yang and Gao generalized the result in the general energy-critical case  in \cite{DY2019dcds,GY2018}. 

The existence of the extremizer for the sharp Hardy-Littlewood-Sobolev inequality \eqref{HLS:quote} is more subtle than the fact that the inequality \eqref{HLS:quote} holds.  The rearrangement inequalities, the conformal transform and the stereographic projection are useful arguements to show the existence of the extremizer of \eqref{HLS:quote}, see \cite{Lieb1983am, lieb2001analysis}.  In fact, we have

\begin{theo}{\cite{FL2012, GY2018, MWX:Hartree}}
\label{theo1}
Let $N\geq 3$, 
$0<\lambda<N$,
and $p=\frac{2N-\lambda}{N-2}$.
Then for any $f, g \in \dot{H}^1\sts{\R^N}\setminus\ltl{0}$, the inequality
\begin{equation}\label{critical HLS inequality}
\sts{
\rint\rint
\frac{
    \pabs{\fct{f}{x}}{p}\pabs{\fct{g}{y}}{p}
}{
    \pabs{x-y}{\lambda}
}\diff{x}\diff{y}
}^{\frac{1}{p}}
\leq
\Const{C}\sts{N,\lambda}
\norm{\nabla f}{2}\norm{\nabla g}{2}
\end{equation}
holds with sharp constant
\begin{equation}\label{C}
  \Const{C}\sts{N,\lambda}
=
\sts{
\frac{
        \fct{\Gamma}{N}
    }{
        \fct{\Gamma}{\frac{N}{2}}
        \sts{4\pi}^{\frac{N}{2}}
    }
}
\sts{
\frac{
    \fct{\Gamma}{\frac{N}{2}}\fct{\Gamma}{\frac{N-\lambda}{2}}
}{
    \fct{\Gamma}{{N}}\fct{\Gamma}{N-\frac{\lambda}{2}}
}
\sts{4\pi}^{N}
}^{\frac{N-2}{2N-\lambda}}.
\end{equation}
Moreover, the equality in \eqref{critical HLS inequality} holds if and only if
\begin{equation*}
  f(x)=c\cdot \sts{\delta^2+ \pabs{x-x_0}{2}}^{-\frac{N-2}{2}},
  \qtq{and}
  \fct{g}{x}=c'\cdot f(x),
\end{equation*}
where $c,c'\in \R\setminus\ltl{0}$, $\delta>0$, and $x_0\in\R^N$.
\end{theo}

%
%

As for the rigidity classification of the positive solution to the equation \eqref{NLH} in $L^{\frac{2N}{N-2}}(\R^N)$,  Miao, Wu and the fourth author firstly showed that any nontrivial solutions to  the equation \eqref{NLH}  with constant sign in the case $p=2$, $\lambda=4<N$ must be the form of \eqref{f} by use of the Kelvin transform and the moving plane method in \cite{MWX:Hartree}. Later, Du, Yang \cite{DY2019dcds} and Guo, Hu, Peng, Shuai  \cite{GHPS2019cvpde} independently generalized the result in the general case. More precisely, we have
\begin{prop}[\cite{DY2019dcds,GHPS2019cvpde,MWX:Hartree}]
	Suppose that $N\geq 3$,  $0<\lambda<N$ and $p=\frac{2N-\lambda}{N-2}$. Let $u$ be a nontrivial solution of the equation \eqref{NLH} with constant sign, then there exist $c\in\R\setminus\{0\},~\delta>0$ and $x_0\in\R^N$
	such that $\fct{u}{x}=\fct{U_{c,\delta,x_0}}{x}$, where
	\begin{equation}\label{u:all}
	\fct{U_{c,\delta,x_0}}{x} = c\cdot \sts{\delta^2+\pabs{x-x_0}{2}}^{-\frac{N-2}{2}}.
	\end{equation}
\end{prop}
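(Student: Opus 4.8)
The plan is to prove the classification by the method of moving planes in its integral formulation, combined with the Kelvin transform and the conformal invariance of the critical exponent $p=\frac{2N-\lambda}{N-2}$; this is the route of \cite{MWX:Hartree,DY2019dcds,GHPS2019cvpde}. Throughout, solutions are understood in the finite-energy class $u\in L^{\frac{2N}{N-2}}(\R^N)$, so that the nonlocal term is well defined. Since $u$ has constant sign one first reduces to $u>0$ (replacing $u$ by $|u|$, which solves the equation with $|u|^p$, $|u|^{p-2}u$ in place of $u^p$, $u^{p-1}$; then $-\Delta u\ge 0$ and the strong maximum principle give $u>0$, and the case $c<0$ in \eqref{u:all} is recovered at the end). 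Writing $V(x):=\int_{\R^N}|x-y|^{-\lambda}u^p(y)\,\diff{y}$, the equation becomes $-\Delta u={\bm\alpha}(N,\lambda)\,V\,u^{p-1}$, and the first step is a regularity/decay bootstrap: the Hardy--Littlewood--Sobolev inequality gives $V\in L^{\frac{2N}{\lambda}}(\R^N)$ and $Vu^{p-1}\in L^{\frac{2N}{N+2}}(\R^N)$, a Brezis--Kato/Moser iteration upgrades $u$ to $L^\infty(\R^N)\cap C^\infty(\R^N)$, and iterating the Newtonian-potential representation $u(x)=c_N{\bm\alpha}(N,\lambda)\int_{\R^N}|x-y|^{2-N}V(y)u^{p-1}(y)\,\diff{y}$ produces the sharp decay $u(x)\lesssim(1+|x|)^{-(N-2)}$ and $V(x)\lesssim(1+|x|)^{-\lambda}$. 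After this, $(u,V)$ is a pair of positive, smooth, decaying functions solving the cooperative integral system $u=c_N{\bm\alpha}(N,\lambda)\big(|x|^{2-N}\ast(Vu^{p-1})\big)$, $V=|x|^{-\lambda}\ast u^p$ (cooperative since $p>1$).

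Next I would run the moving planes on this system. Fix $e_1$, set $\Sigma_\mu=\{x_1<\mu\}$, let $x^\mu$ be the reflection across $\{x_1=\mu\}$ and $u_\mu=u(x^\mu)$, $V_\mu=V(x^\mu)$. Subtracting the integral identities at $x$ and $x^\mu$ gives, for $x\in\Sigma_\mu$,
\begin{equation*}
  u_\mu(x)-u(x)=\int_{\Sigma_\mu}K_\mu(x,y)\,{\bm\alpha}(N,\lambda)\big(V_\mu u_\mu^{p-1}-Vu^{p-1}\big)(y)\,\diff{y},
\end{equation*}
with the positive kernel $K_\mu(x,y)=c_N\big(|x-y|^{2-N}-|x^\mu-y|^{2-N}\big)$, together with the analogous identity for $V_\mu-V$ (kernel $|x-y|^{-\lambda}-|x^\mu-y|^{-\lambda}$, source $u_\mu^p-u^p$). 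On the bad set $\Sigma_\mu^-=\{x\in\Sigma_\mu:\ u_\mu(x)<u(x)\ \text{or}\ V_\mu(x)<V(x)\}$ I would estimate the right-hand sides by Hardy--Littlewood--Sobolev inequalities restricted to $\Sigma_\mu^-$; the decay from the first step makes the relevant norms of $u,V$ over $\Sigma_\mu^-$ small as $\mu\to-\infty$, forcing $\Sigma_\mu^-=\varnothing$, hence $u_\mu\ge u$ and $V_\mu\ge V$ in $\Sigma_\mu$ for all $\mu\ll0$. Sliding $\mu$ up to the critical value $\mu_0$, using the decay to exclude $\mu_0=+\infty$, a standard continuation argument yields $u_{\mu_0}\equiv u$, $V_{\mu_0}\equiv V$, so $u$ is symmetric about $\{x_1=\mu_0\}$; since $e_1$ was arbitrary, $u$ and $V$ are radially symmetric and strictly decreasing about a common point $x_0$.

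Finally, to pin down the profile I would translate $x_0$ to the origin and use the Kelvin transform $\widehat u(x)=|x|^{2-N}u(x/|x|^2)$, which solves the same integral system and, by the decay of $u$, extends across the origin to a bounded positive radial solution; running the moving-plane scheme simultaneously for $u$ and $\widehat u$ (equivalently, the method of moving spheres for the integral system) shows that $u$ coincides with its Kelvin transform about a sphere of a suitable critical radius centered at every point, and this family of identities forces $u(x)=c(\delta^2+|x-x_0|^2)^{-(N-2)/2}$ for some $c>0$, $\delta>0$. Alternatively, once radial symmetry about $x_0$ is known one checks directly --- e.g.\ via the Emden--Fowler substitution $u(r)=r^{-(N-2)/2}w(\log r)$ and a uniqueness analysis for the resulting (nonlocal) equation --- that the bubble is the only positive radial finite-energy solution. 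Undoing the reduction to $u>0$ restores $c\in\R\setminus\{0\}$, which is \eqref{u:all}.

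The main obstacle is the nonlocality: the moving-plane comparison cannot be carried out pointwise by a differential maximum principle, so the two equations must be treated as a genuinely coupled integral system. Concretely, the delicate points will be (i) establishing the sharp \emph{simultaneous} decay of $u$ and $V$ in the first step --- precisely what lets the sliding procedure start and keeps the critical plane from escaping to infinity --- and (ii) closing the comparison on the bad set $\Sigma_\mu^-$ via Hardy--Littlewood--Sobolev estimates with small norms, uniformly in the coupling. Once these are in hand, the profile identification is the standard conformal classification lemma for critical integral equations. (If one does not wish to assume a priori that $u\in L^{\frac{2N}{N-2}}(\R^N)$, one argues instead in the spirit of Caffarelli--Gidas--Spruck, running the moving planes directly on the possibly singular Kelvin transform; the extra difficulty is then handling the singularity at the origin.)
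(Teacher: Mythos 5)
This proposition is quoted by the paper from \cite{MWX:Hartree,DY2019dcds,GHPS2019cvpde} and is not proved there; the authors only record that the cited works use the Kelvin transform and the method of moving planes. Your sketch follows exactly that route (regularity/decay bootstrap, moving planes for the coupled integral system in the Chen--Li--Ou style, then the Kelvin-transform/moving-spheres classification of the profile), so it is essentially the same approach as the proofs the paper relies on, and the outline is sound.
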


Motivated by the nondegeneracy results of eigenfunctions and ground state in \cite{FrankL:ActaMath, FJL2008, tao:book, Teschl:book},  a natural question arises
in the study of bubble solutions to the equation \eqref{NLH} is that
\begin{center}
	\textit{Are the positive solutions of
		\eqref{NLH}   non-degenerate? }
\end{center}

Since  the equation \eqref{NLH} is invariant
under the scaling and spatial translations, i. e., 
any solution ${v}$ solves the equation \eqref{NLH} if and only if $ \fct{v_{\delta,x_0}}{x}
=\delta^{\frac{N-2}{2}}
\fct{v}{\delta x+x_0}$ satisfies
\begin{equation}
\tag{NLH*}
\label{NLH:scale}
  -{\Laplace v_{\delta,x_0}}\sts{x}
  -{\bm\alpha}\sts{N,\lambda}
  \int_{\R^N}
  { \frac{ v_{\delta,x_0}^{p}\sts{y}}{\pabs{\,x-y\,}{\lambda}} }\diff{y}\,
  v_{\delta,x_0}^{p-1}\sts{x}
  =0
\end{equation}
for any $\delta>0$ and $x_0\in \R^N$.
Hence, for simplicity, it suffices to consider the solution \eqref{u:all} with
the normalized parameters $c=1$,  $\delta=1$ and $x_0=0,$ i.e.
\begin{equation}\label{u}
  \fct{u}{x}
  :=\fct{U_{1,1,0}}{x}
  =\sts{1+\abs{x}^2}^{-\frac{N-2}{2}}.
\end{equation}
That is the reason why we choose the normalized constant ${\bm\alpha}\sts{N,\lambda}$ in \eqref{NLH} and \eqref{NLH:scale}.  By differentiating \eqref{NLH:scale} with respect to
$\delta$ and $x_0$ at $\sts{\delta,x_0}=\sts{1,0}$, we know that
the generator
\begin{align}
\label{null 1}
\fct{\varphi_{j}}{x}:=\fct{\frac{\partial u}{\partial x_j}}{x}
=
\sts{2-N}\fct{u}{x}\frac{x_j}{1+\pabs{x}{2}},\;\;
~
1\leq j\leq N,
\end{align}
and
\begin{align}
\label{null 2}
\fct{\varphi_{N+1}}{x}
  :=
\frac{N-2}{2}\fct{u}{x}+x\cdot\fct{\nabla u}{x}
=\frac{N-2}{2}\fct{u}{x}\frac{1-\pabs{x}{2}}{1+\pabs{x}{2}}
,
\end{align}
are $N+1$ linear independent bounded  solutions  with vanishing at infinity
to the following linearized equation
\begin{align}
\label{linear equation}
  -{\Laplace \varphi }
  ={\bm\alpha}\sts{N,\lambda}
    \left[{p\sts{ \frac{1}{\pabs{\,\cdot\,}{\lambda}}
    \ast u^{p-1}\varphi }
    u^{p-1}
  +
  \sts{p-1}\sts{ \frac{1}{\pabs{\,\cdot\,}{\lambda}}\ast u^{p} }
  u^{p-2}\varphi
}\right].
\end{align}

\begin{defi}
	The solution $u$ defined by \eqref{u} to the equation \eqref{NLH} is said to be \textit{nondegenerate}
	if any bounded solution vanishing at infinity to the linearized equation \eqref{linear equation}  must be the linear combinations of
	the functions $\varphi_1,\cdots,\varphi_{N}$ and $\varphi_{N+1}$ defined by \eqref{null 1} and \eqref{null 2}.
\end{defi}

We now state the main result in this paper.
\begin{theo}
\label{theo2}
Let $N\geq 3$, $\lambda\in\sts{0,N}$ and $p=\frac{2N-\lambda}{N-2}$. Then the nontrivial solution $U_{c,\delta, x_0}$ of the equation \eqref{NLH} with constant sign is nondegenerate. 

\end{theo}

The result in Theorem \ref{theo2} solves an open nondegeneracy problem in \cite{GMYZ2022cvpde, MWX:Hartree}, and generalizes the partial nondegeneracy results in \cite{DY2019dcds, GWY2020na, LTX2021, MWX:Hartree} to the full rang $\lambda\in (0, N)$.  The argument in this paper is  different with those in \cite{GWY2020na,DY2019dcds} and  \cite{Lenzmann2009apde, LTX2021, Xiang2016cvpde}.  In fact, we rewrite \eqref{linear equation} as an integral form in $\R^N$, and its equivalent integral form on the sphere $\S^N$  via the stereographic projection  $\mathcal{S}: \R^N \longrightarrow \S^N$. The key observation is that together with the spherical harmonic decomposition and the Funk-Hecke formula in \cite{AH2012, DX2013book, SteinW:Fourier anal},   the weighted pushforward map $\mathcal{S}_*$ related to the stereographic projection $\mathcal{S}$ is  one-to-one map between the null space of the linearized operator and the spherical harmonic function subspace $\mathcal{H}_1^{N+1}$ of degree one. The idea with use of  the stereographic projection and  the Funk-Hecke formula is inspired by Frank and Lieb in \cite{FL2012am,FL2012},

\begin{rema}[Nondegeneracy of positive solutions to
 nonlinear elliptic equations with local nonlinearity]
There were extensive literatures to show
the nondegeneracy of positive solutions to nonlinear elliptic equations. Weinstein \cite{Weinstenin1985siam} and
Oh \cite{Oh1990cmp} made use of the spherical harmonic  expansion to 
obtain the nondegeneracy of positive solutions to
 nonlinear elliptic equation with subcritical nonlinearity
\begin{equation*}
  -{\Laplace u}\sts{x}
  +\fct{u}{x}
  -
  u^{p}\sts{x}
  =0, \quad x\in\R^N,\quad 1<p<\frac{N+2}{N-2}. 
\end{equation*}
Rey \cite{Rey1990jfa}, Dolbeault and Jankowiak \cite{DJ2014jde} made use of the stereographic projection to 
obtain the nondegeneracy of positive
bubble solutions to  nonlinear elliptic equation with critical nonlinearity
	\begin{equation}
\label{eq:nls}
-{\Laplace u}\sts{x}
-
u^{\frac{N+2}{N-2}}\sts{x}
=0, \quad x\in\R^N.
\end{equation}
 
%
%
%
%
%
%
%
%

For other applications of the spherical harmonic expansion in the nondegeneracy of positive solutions for other nonlinear elliptic equations with local nonlinearity, please refer to  \cite{Robert2017ana,GGN2013am,BWW2003cvpde, DGG2017prse,AWY2016ade,FVald:fract NLS, DPS2013pams, FrankL:ActaMath, FrankLS:CPAM, FKP2020pams,  AMW2019BSMF,PV2021prse, MW2015cmp, tao:book} and references therein.
\end{rema}
\begin{rema}[Nondegeneracy of positive solutions to
nonlinear Hartree equations] Let $N\geq 3$, $0<\lambda<N$ and $1<p<\frac{2N-\lambda}{N-2}$. 
Due to nonlocal nonlinearity of nonlinear Hartree equations,
the nondegeneracy of positive solutions to the equation
\eqref{NLH}
\begin{equation}
\label{snlh}
  -{\Laplace u}\sts{x}
  +\fct{u}{x}
  -
  \int_{\R^N}
  { \frac{ u^{p}\sts{y}}{\pabs{\,x-y\,}{\lambda}} }\diff{y}\,
  u^{p-1}\sts{x}
  =0, ~x\in\R^N, 
\end{equation}
is more subtle than
that of positive solutions to nonlinear elliptic equations with local nonlinearity.

Lenzmann firstly
made use of the multipole expansion of the Newtonian potential
, and
obtained the nondegeneracy of the ground state to
\begin{equation}
\label{3dNLH}
  -{\Laplace u}\sts{x}
  +\fct{u}{x}
  -
  \frac{1}{8\pi}\int_{\R^3}
  { \frac{ u^{2}\sts{y}}{\abs{\,x-y\,}} }\diff{y}\,
  u\sts{x}
  =0, ~x\in\R^3
\end{equation}
 in \cite{Lenzmann2009apde}, see also \cite{WW2009jmp}. For  the application of the multipole expansion of the Newtonian potential  in $\R^N (3\leq N\leq 5)$   to the nondegeneracy of positive solution to \eqref{snlh} with $\lambda=N-2$ and $p=2$, we can refer to \cite{Chen2021rm}. Later, Xiang made use of a perturbation argument to obtain the  non-degeneracy of positive solution to \eqref{snlh}
for the case that $\lambda=1$ and $p$ is slightly larger than $2$ in $\R^3$ in  \cite{Xiang2016cvpde},  and recently, Li extended the perturbation argument to the non-degeneracy of positive solution to \eqref{snlh}  for  the case that $\lambda$ close to $N-2$ and $p$ slightly larger than $2$ in $\R^N$ in \cite{Li2022} . For other applications of the perturbation argument, please refer to \cite{DY2019dcds, GWY2020na} for the case that $\lambda$ is closed to $0$ or $N$.



Recently, Li, Tang and Xu made use of the  spherical harmonic expansion and the multipole expansion of the Newtonian potential to obtain the  non-degeneracy of 
bubble solutions
to the energy-critical Hartree equation in $\R^6$
\begin{equation}
\label{6dNLH}
  -{\Laplace u}\sts{x}
  -
  \int_{\R^6}
  { \frac{ u^{2}\sts{y}}{\pabs{\,x-y\,}{4}} }\diff{y}\,
  u\sts{x}
  =0, ~x\in\R^6
\end{equation}
in \cite{LTX2021}. We can also refer to \cite{GMYZ2022cvpde}.

Recently, the authors make use of the Moser iteration method in \cite{DiMeVald:book} to obtain the $L^{\infty}(\R^N)$ regularity of the energy solution to the linearized equation \eqref{linear equation}, and  show that the nontrivial solution $U_{c,\delta, x_0}$ of the equation \eqref{NLH} with constant sign is nondegenerate in the energy space $\dot H^1(\R^N)$ in \cite{LLTX2023}. At the same time, the authors consider  long time dynamics of the radial threshold solution  of  the equation \eqref{gNLH} and its rigidity classification in \cite{LLTX2023}, which depends on  the nondegeneracy of the bubble solutions in the energy space $\dot H^1(\R^N)$ and the spectrum of the linearized operator.
\end{rema}

\begin{rema}[Application of nondegeneracy of positive solution in the  construction of multi-bubble solutions]
	The existence of bubble solutions to the energy-critical nonlinear Hartree equation
	has
	been well-studied recently, see \cite{GWY2020na,DY2019dcds,GHPS2019cvpde,GY2018, MWX:Hartree}.
It is interesting to construct
	the existence of multi-bubble solutions to the
	equation \eqref{NLH}.
	To the best of our knowledge, there are few results concerning
	multi-bubble solutions to \eqref{NLH} except that in \cite{GMYZ2022cvpde}.
	However, for the limiting case $\lambda=0$ in \eqref{NLH}, i.e.
	\begin{equation}
	\label{eq:nls}
	-{\Laplace u}\sts{x}
	-
	u^{\frac{N+2}{N-2}}\sts{x}
	=0, \quad x\in\R^N,
	\end{equation}
	the multi-bubble solutions has been
	constructed by using the \LS argument in \cite{PMPP2013SNS,PMPP2011jde}, where
	the nondegeneracy of positive solutions plays a crucial role. 
\end{rema}

Lastly, the rest of this paper is organized as follows. 
In \Cref{sec:preliminary},
we introduce some notation, the preliminary results about the stererographic projection and the Funk-Hecke formula of the spherical harmonic functions.
In \Cref{sec:proof of theo2}, we prove \Cref{theo2}. 

\noindent \subsection*{Acknowledgements.}
The authors were supported by National Key Research and Development Program of China (No. 2020YFA0712900) and by NSFC (No. 12371240, No. 12431008). 

\section{Notation and Preliminary Results}
\label{sec:preliminary}
In this section, we  introduce
some notation. We denote
$
\jp{x}=\sts{1+\pabs{x}{2}}^{\frac{1}{2}},
$
and use $\S^N$ to denote the unit sphere in $\R^{N+1}$,
 i.e.
\begin{equation*}
  \S^N=\ltl{\xi=\sts{\xi_1,\xi_2,\cdots,\xi_{N+1}}\in\R^{N+1}
  \quad \middle\vert\quad  \sum_{j=1}^{N+1}\xi_j^2=1},
\end{equation*}
and $g_{ij}\sts{1\leq i,\, j\leq N+1}$ stand for the metric on $\S^N$, which is inherited from $\R^{N+1}$. For any $1\leq p<+\infty$, let us denote by
$L^p\sts{\R^N}$ and $L^p\sts{\S^N}$
the space of real-valued $p$-th power integrable functions on
$\R^N$ and $\S^n$. Moreover, with a little abuse of notation,
we equip $L^p\sts{\R^N}$ and $L^p\sts{\S^N}$ with the norms:
\begin{equation*}
\norm{f}{p}=\sts{\int_{\R^N}\pabs{\fct{f}{x}}{p}\diff{x}}^{\frac{1}{p}},
\qtq{~for~} f\in L^p\sts{\R^N},
\end{equation*}
and
\begin{equation*}
\norm{F}{p}=\sts{\int_{\S^N}\pabs{\fct{F}{\xi}}{p}\diff{\xi}}^{\frac{1}{p}},
\qtq{~for~} F\in L^p\sts{\S^N},
\end{equation*}
where $\diff{\xi}$ is the standard volume element on the sphere $\S^N$. 

We denote the stereographic projection $ \Scal: \R^N  \mapsto  \S^N\setminus\ltl{\sts{0,0,\cdots,0,-1}}$ by 
\begin{align*}
  \Scal x =  \sts{
    \frac{2x}{1+\pabs{x}{2}},
    \frac{1-\pabs{x}{2}}{1+\pabs{x}{2}}
  },
\end{align*}
and  its inverse map $  \Scal^{-1}:  \S^N\setminus\ltl{\sts{0,0,\cdots,0,-1}}  \mapsto  \R^N$ by
\begin{align*}
 \Scal^{-1} \sts{\xi_1,\xi_2,\cdots,\xi_{N+1}}
=
  \sts{
    \frac{\xi_1}{1+\xi_{N+1}},
    \frac{\xi_2}{1+\xi_{N+1}},
    \cdots,
    \frac{\xi_N}{1+\xi_{N+1}}
  }.
\end{align*}

Let $
\label{rho}
\fct{\rho}{x}=
\sts{\frac{2}{1+\pabs{x}{2}}}^{\frac{1}{2}}
$.  From  \cite{lieb2001analysis, FL2012},  we have
\begin{align}
\label{stere proj:identity}
  g_{ij} = \fct{\rho^{4}}{x}\delta_{ij}, \;\;   {\abs{\Scal x-\Scal y}}
=
{\abs{ x- y}}
\fct{\rho}{x}\fct{\rho}{y}
,  
\end{align}
and 
\begin{align}\label{trans:det}
\diff{\xi}=\fct{\rho^{2N}}{x}\diff{x}.
\end{align}
 Therefore, for any $F\in L^1\sts{\S^N}$, we have the following identity 
 \begin{equation*}
 \sint \fct{F}{\xi}\diff{\xi}
 =\rint \fct{F}{\Scal x}{\fct{\rho^{2N}}{x}}\diff{x}.
 \end{equation*}
%

In order to relate the functions between in $\R^N$ and on the sphere $\S^N$,  we can compose the functions in $\R^N$ and $\S^N$ with the maps $\mathcal{S}^{\pm}$.  For any $f:\R^N\mapsto \R$,  we denote  the weighted pushforward map $  \Scal_{\ast}f:
 \S^N\setminus\ltl{\sts{0,0,\cdots,0,-1}}  \mapsto  \R $ by 
\begin{align}
\label{R2S under S}
 \Scal_{\ast}f( \xi) =
  \fct{\rho^{2-N}}{\Scal^{-1}\xi}\fct{f}{\Scal^{-1}\xi},
\end{align}
and for any
$F:\S^N\setminus\ltl{\sts{0,0,\cdots,0,-1}}\mapsto \R$,
we denote the weighted pullback map $  \Scal^{\ast}F:
 \R^N  \mapsto  \R$ by 
\begin{align}
\label{S2R under S}
  \Scal^{\ast}F (x)=
  \fct{\rho^{N-2}}{x}\fct{F}{\Scal x}.
\end{align}

A simple calculation shows that
\begin{prop}\label{prop:Rn to Sn}
	Let $\varphi_j$, $1\leq j\leq N+1$ be defined by \eqref{null 1} and \eqref{null 2} and $\Scal_{*}$ be defined by \eqref{R2S under S}, then for any $\xi\in \S^N$ and $1\leq j \leq N$,  we have
\begin{align}
\label{S:phi}
	\Scal_{*} \varphi_j (\xi) =  \frac{2-N}{2}\, 2^{\frac{2-N}{2}} \xi_j, \;\; \text{and}\;\;
		\Scal_{*} \varphi_{N+1} (\xi)  = \frac{N-2}{2}\, 2^{\frac{2-N}{2}} \xi_{N+1}.
\end{align}
\end{prop}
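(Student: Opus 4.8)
The plan is to verify \eqref{S:phi} by direct substitution; the whole point is that the weight $\rho^{2-N}$ appearing in the definition \eqref{R2S under S} of $\mathcal{S}_{\ast}$ is tailored exactly so as to cancel the decay factor $u(x)=(1+|x|^2)^{-\frac{N-2}{2}}$ carried by each $\varphi_j$. First I would record, for $\xi\in\S^N\setminus\{(0,\dots,0,-1)\}$ with $x=\mathcal{S}^{-1}\xi$, the three elementary identities obtained by reading off the formulas for $\mathcal{S}$ and $\rho$:
\[
\xi_j=\frac{2x_j}{1+|x|^2}\ \ (1\le j\le N),\qquad \xi_{N+1}=\frac{1-|x|^2}{1+|x|^2},\qquad \rho^{2-N}(x)=2^{\frac{2-N}{2}}\bigl(1+|x|^2\bigr)^{\frac{N-2}{2}}.
\]

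For $1\le j\le N$, I would insert \eqref{null 1} into \eqref{R2S under S} and use the third identity to get
\[
\mathcal{S}_{\ast}\varphi_j(\xi)=\rho^{2-N}(x)\,(2-N)\,(1+|x|^2)^{-\frac{N-2}{2}}\,\frac{x_j}{1+|x|^2}=2^{\frac{2-N}{2}}(2-N)\,\frac{x_j}{1+|x|^2}=\frac{2-N}{2}\,2^{\frac{2-N}{2}}\,\xi_j,
\]
the last step being the first identity. The computation for $\varphi_{N+1}$ is the same: by \eqref{null 2} the factors $(1+|x|^2)^{\pm\frac{N-2}{2}}$ again cancel, leaving $\mathcal{S}_{\ast}\varphi_{N+1}(\xi)=2^{\frac{2-N}{2}}\tfrac{N-2}{2}\tfrac{1-|x|^2}{1+|x|^2}=\tfrac{N-2}{2}\,2^{\frac{2-N}{2}}\,\xi_{N+1}$.

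This proves \eqref{S:phi} on the dense open subset $\S^N\setminus\{(0,\dots,0,-1)\}$, and the only point meriting a word is the passage to the excluded south pole $(0,\dots,0,-1)$, where the identity is also claimed. Since the right-hand sides of \eqref{S:phi} are linear and hence continuous in $\xi$, it is enough to observe that the left-hand sides extend continuously there: as $|x|\to\infty$ the quantities $2^{\frac{2-N}{2}}(2-N)\frac{x_j}{1+|x|^2}$ and $2^{\frac{2-N}{2}}\frac{N-2}{2}\frac{1-|x|^2}{1+|x|^2}$ tend respectively to $0$ and to $-\frac{N-2}{2}2^{\frac{2-N}{2}}$, which are exactly the values of the right-hand sides at $(0,\dots,0,-1)$. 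Hence \eqref{S:phi} holds on all of $\S^N$. I do not expect any genuine obstacle here: the proof is pure bookkeeping, and its only content is the observation that $2-N$ is the correct conformal weight in \eqref{R2S under S}, a fact that will be used repeatedly in \Cref{sec:proof of theo2}.
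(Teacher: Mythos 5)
Your proof is correct and is essentially the same direct substitution carried out in the paper: insert \eqref{null 1}--\eqref{null 2} into \eqref{R2S under S}, note that $\rho^{2-N}(x)=2^{\frac{2-N}{2}}(1+|x|^2)^{\frac{N-2}{2}}$ cancels the factor $u(x)$, and read off the components of $\Scal x$. Your extra remark about extending to the south pole by continuity is a harmless refinement the paper omits.
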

\begin{proof}
First of all, for $j=1,2,\cdots,N$, we have by \eqref{R2S under S} that 
\begin{align}
\label{eq:112601}
\Scal_{*} \varphi_j \left(\Scal x\right) =  \fct{\rho^{2-N}}{x}\fct{\varphi_j}{x}, 
\end{align}
which, together with \eqref{null 1} and \eqref{null 2}, implies that 
\begin{align*}
\Scal_{*} \varphi_j \left(\Scal x\right) =  
\frac{2-N}{2}2^{\frac{2-N}{2}}\frac{2x_j}{1+\abs{x}^2}=
\frac{2-N}{2}2^{\frac{2-N}{2}}\left(\Scal x\right)_j=
\frac{2-N}{2}2^{\frac{2-N}{2}}\xi_j,
\end{align*}
and
\begin{align*}
\Scal_{*} \varphi_{N+1} \left(\Scal x\right) =  
\frac{N-2}{2}2^{\frac{2-N}{2}}\frac{1-\abs{x}^2}{1+\abs{x}^2}=
\frac{N-2}{2}2^{\frac{2-N}{2}}\left(\Scal x\right)_{N+1}=
\frac{N-2}{2}2^{\frac{2-N}{2}}\xi_{N+1},
\end{align*}
which implies \eqref{S:phi} and completes the proof.
\end{proof}

 We now introduce the spherical harmonic functions, which are  related to the spectral properties of the Laplace-Beltrami opertor on the sphere $\S^N$ 
 (see  \cite{AH2012, DX2013book, SteinW:Fourier anal}).
 In fact, we have the following orthogonal decomposition:
 \begin{equation}
 \label{orthogonal decomposition:L2}
 L^2\sts{\S^N}=\bigoplus\limits_{k=0}^{\infty}\Hscr_{k}^{N+1},
 \end{equation}
 where $\Hscr_{k}^{N+1}\sts{k\geq 0}$ denote
 the mutually orthogonal subspace of
 the restriction on $\S^N$ of real,  homogeneous harmonic polynomials of degree $k$, and 
 \begin{equation*}\label{dim H}
 \dim{\Hscr_{k}^{N+1}}
 :=
 \begin{dcases}
 1, & \mbox{if } k=0, \\
 N+1, & \mbox{if } k=1, \\
 \binom{k+N}{k}-\binom{k-2+N}{k-2}, & \mbox{if } k\geq 2.
 \end{dcases}
 \end{equation*}
 We will use
 $\ltl{Y_{k,j}\mid 1\leq j\leq \dim{\Hscr_{k}^{N+1}}}$
 to denote an orthonormal basis of $\Hscr_{k}^{N+1}$.
 In particular, we have 
 \begin{equation*}
 \fct{Y_{1,j}}{\xi}=
 \sqrt{\frac{\sts{N+1}
 		\fct{\Gamma}{\frac{N}{2}}}{ 2\pi^{\frac{N}{2}}}}\xi_j,
 \quad 1\leq j\leq N+1,
 \end{equation*}
 and
 \begin{equation}\label{basis for eig 1}
 {\Hscr_{1}^{N+1}}
 =
 \mathrm{span}
 \ltl{
 	\xi_j
 	\,\middle\vert\,
 	1\leq j\leq N+1
 },
 \end{equation}
 which together with Proposition \ref{prop:Rn to Sn} implies that the weighted pushfoward map $\Scal_{*}$ is one-to-one map from the subspace $\text{Span}\{\varphi_j,   1\leq j\leq N+1\}\subset L^{\infty}(\R^N)$ to the subspace $ {\Hscr_{1}^{N+1}}$, and so is the weighted pullback map $\Scal^{*}:  {\Hscr_{1}^{N+1}}\rightarrow  \text{Span}\{\varphi_j,   1\leq j\leq N+1\}$. This is the key observation in the proof of Theorem \ref{theo2}. 
 
  
We recall the Funk-Hecke formula of the spherical harmonic functions as follows.
 
 \begin{lemm}[\cite{AH2012, DX2013book}]
 	\label{lem:Funk-Hecke}
 	Let $\lambda\in\sts{0,N}$, the integer $k\geq 0$ and $\fct{\mu_k}{\lambda}$ be defined by
 	\eqref{mu k lambda}, then for any $Y \in \Hscr_{k}^{N+1}$, we have
 	\begin{equation}
 	\label{Funk-Hecke}
 	\sint \frac{1}{\pabs{\xi-\eta}{\lambda}}\fct{Y}{\eta}\diff{\eta}
 	=\fct{\mu_k}{\lambda}\fct{Y}{\xi}. 
 	\end{equation}
 	where 
 		\begin{equation}\label{mu k lambda}
 	\fct{\mu_k}{\lambda} = 2^{N-\lambda}\pi^{\frac{N}{2}}
 	\frac{
 		\fct{\Gamma}{k+\frac{\lambda}{2}}
 		\fct{\Gamma}{\frac{N-\lambda}{2}}
 	}{
 		\fct{\Gamma}{\frac{\lambda}{2}}
 		\fct{\Gamma}{k+N-\frac{\lambda}{2}}
 	}.
 	\end{equation}
 \end{lemm}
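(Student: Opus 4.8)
The plan is to reduce the kernel to a zonal function of the inner product $\xi\cdot\eta$, invoke the abstract Funk--Hecke mechanism to identify each $\mathscr{H}_k^{N+1}$ as an eigenspace, and then evaluate the resulting one-dimensional Gegenbauer integral in closed form. First, since $\xi,\eta\in\mathbb{S}^N$ one has $|\xi-\eta|^2=2-2\,\xi\cdot\eta$, so
\[
\frac{1}{|\xi-\eta|^{\lambda}}=2^{-\lambda/2}\,(1-\xi\cdot\eta)^{-\lambda/2}=:g(\xi\cdot\eta),
\]
i.e.\ the kernel depends only on $t=\xi\cdot\eta\in[-1,1)$. I would first record that the relevant integrals converge precisely because $0<\lambda<N$: against the natural weight $(1-t^2)^{(N-2)/2}$ the exponent of $(1-t)$ is $(N-2-\lambda)/2>-1\iff\lambda<N$, which is exactly the standing hypothesis.

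Next, the integral operator $T_g F(\xi)=\int_{\mathbb{S}^N} g(\xi\cdot\eta)F(\eta)\,d\eta$ commutes with the $O(N+1)$-action, because $d\eta$ is rotation invariant and $g$ depends only on $\xi\cdot\eta$, and it preserves each $\mathscr{H}_k^{N+1}$ since rotations preserve the degree of a harmonic polynomial. As $\mathscr{H}_k^{N+1}$ is an irreducible $O(N+1)$-module, Schur's lemma forces $T_g$ to act on it as a scalar $\mu_k(\lambda)$, which is exactly \eqref{Funk-Hecke}. To pin down the scalar I would use the addition theorem: summing $T_g Y_{k,j}=\mu_k Y_{k,j}$ against $Y_{k,j}(\xi)$ over an orthonormal basis and using $\sum_j Y_{k,j}(\xi)Y_{k,j}(\eta)=\frac{\dim\mathscr{H}_k^{N+1}}{|\mathbb{S}^N|}\,\frac{C_k^{\nu}(\xi\cdot\eta)}{C_k^{\nu}(1)}$ with $\nu=(N-1)/2$, together with rotation invariance to integrate over the polar angle, reduces everything to
\[
\mu_k(\lambda)=\frac{|\mathbb{S}^{N-1}|}{C_k^{\nu}(1)}\int_{-1}^1 g(t)\,C_k^{\nu}(t)\,(1-t^2)^{\nu-\frac12}\,dt,
\]
the standard Funk--Hecke eigenvalue formula; alternatively this identity is quoted directly from \cite{AH2012,DX2013book}.

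The main work is the explicit evaluation of $\int_{-1}^1 (1-t)^{-\lambda/2}C_k^\nu(t)(1-t^2)^{\nu-1/2}\,dt$. Here I would insert the Rodrigues formula for $C_k^\nu$, integrate by parts $k$ times (boundary terms vanish since $\nu>-\tfrac12$), differentiate the power $(1-t)^{-\lambda/2}$, and collapse the remainder to a Beta integral $\int_{-1}^1(1-t)^p(1+t)^q\,dt=2^{p+q+1}\frac{\Gamma(p+1)\Gamma(q+1)}{\Gamma(p+q+2)}$. Substituting $\nu=(N-1)/2$ and simplifying with $|\mathbb{S}^{N-1}|=2\pi^{N/2}/\Gamma(N/2)$ and $C_k^\nu(1)=\Gamma(k+N-1)/(\Gamma(N-1)\,k!)$, all spurious Gamma factors cancel in pairs, the factors of $2$ combine as $2^{1}\cdot 2^{-\lambda/2}\cdot 2^{N-1-\lambda/2}=2^{N-\lambda}$, and one recovers precisely \eqref{mu k lambda}.

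The expected main obstacle is the bookkeeping in this last step: tracking the sign from $k$ integrations by parts, which I would absorb cleanly via $(-1)^k\Gamma(a+1)/\Gamma(a-k+1)=\Gamma(k-a)/\Gamma(-a)$ taken at $a=-\lambda/2$; checking that the boundary contributions genuinely vanish; and confirming convergence at the singular endpoint $t=1$ uniformly over the whole range $0<\lambda<N$. Once these points are settled, matching the computed value to the stated $\mu_k(\lambda)$ is a direct identification of Gamma-function arguments.
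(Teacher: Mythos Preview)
The paper does not give a proof of this lemma at all: it simply states the Funk--Hecke formula with the explicit eigenvalue $\mu_k(\lambda)$ and cites \cite{AH2012,DX2013book}. So there is nothing in the paper to compare your argument against beyond the bare reference.

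Your sketch is the standard textbook derivation and is essentially correct. Reducing $|\xi-\eta|^{-\lambda}$ to a zonal function $g(\xi\cdot\eta)=2^{-\lambda/2}(1-\xi\cdot\eta)^{-\lambda/2}$, invoking $O(N+1)$-invariance plus irreducibility of $\mathscr{H}_k^{N+1}$ (Schur) to conclude $T_g$ is scalar on each spherical-harmonic subspace, and then identifying the scalar via the Funk--Hecke integral
\[
\mu_k(\lambda)=\frac{|\mathbb{S}^{N-1}|}{C_k^{\nu}(1)}\int_{-1}^1 g(t)\,C_k^{\nu}(t)\,(1-t^2)^{\nu-1/2}\,dt,\qquad \nu=\tfrac{N-1}{2},
\]
is exactly the route taken in the cited references. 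Your integrability check at $t=1$ (exponent $(N-2-\lambda)/2>-1\iff\lambda<N$) is the right one, and the Rodrigues-plus-Beta-integral evaluation, with the reflection identity $(-1)^k\Gamma(a+1)/\Gamma(a-k+1)=\Gamma(k-a)/\Gamma(-a)$ to absorb the sign, is a clean way to reach the closed form \eqref{mu k lambda}. The only caveats are the ones you already flagged: verifying that the boundary terms in the $k$-fold integration by parts vanish (they do, since the Rodrigues weight carries a factor $(1-t^2)^{k+\nu-1/2}$ that kills contributions at $t=\pm1$ for $\nu>-\tfrac12$ and $\lambda<N$), and keeping the Gamma bookkeeping straight. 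None of this is a genuine gap; your outline would produce a complete proof with routine computation.
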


 In particular, the simple calculation gives that
\begin{gather}
\label{mu01 N-2}
\fct{\mu_0}{N-2}
=
\frac{8}{N}\cdot \frac{\pi^{\frac{N}{2}}}{\fct{\Gamma}{\frac{N}{2}}},
\qquad
\fct{\mu_1}{N-2}
=
\frac{N-2}{N+2}\cdot \fct{\mu_0}{N-2},
\\
\label{mu01 lambda}
\fct{\mu_0}{\lambda}
=
2^{N-\lambda}\pi^{\frac{N}{2}}\cdot 
\frac{
	\fct{\Gamma}{\frac{N-\lambda}{2}}
}{
	\fct{\Gamma}{N-\frac{\lambda}{2}}
},
\qquad
\fct{\mu_1}{\lambda} =\frac{\lambda}{2N-\lambda} \cdot \fct{\mu_0}{\lambda}.
\end{gather}
and 
\begin{align}
\label{monotone mu k}
&\fct{\mu_k}{\lambda}>\fct{\mu_{k+1}}{\lambda},\qtq{for all} k\geq 0,
\end{align}

 As a direct consequence of the  Funk-Hecke formula,
 we have 
 \begin{lemm}[]
 	\label{double funk-hecke}
 	Let $\lambda\in\sts{0,N}$, the integer $k\geq 0$ and $\fct{\mu_k}{\lambda}$ be defined by
 	\eqref{mu k lambda}, then for any $Y \in \Hscr_{k}^{N+1}$,
 	we have
 \begin{align}
 \label{DFH:1}
 	\sint\sint
 	\frac{1}{\pabs{\xi-\eta}{N-2}}
 	\frac{ 1 }{\pabs{\eta-\sigma}{\lambda}}
 	\fct{Y}{\sigma}\diff{\sigma}
 	\diff{\eta}
 	=\fct{\mu_k}{N-2}\fct{\mu_k}{\lambda}\fct{Y}{\xi}
 	,
 	\\
 \label{DFH:2}
 	\sint\sint \frac{1}{\pabs{\xi-\eta}{N-2}}
 	\frac{1}{\pabs{\eta-\sigma}{\lambda}}\fct{Y}{\eta}
 	\diff{\eta}\diff{\sigma}
 	=
 	\fct{\mu_k}{N-2}\fct{\mu_0}{\lambda}\fct{Y}{\xi}.
 \end{align}
 \end{lemm}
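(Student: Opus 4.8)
The plan is to obtain both identities by applying the Funk--Hecke formula (Lemma~\ref{lem:Funk-Hecke}) twice, using Fubini's theorem to interchange the order of integration in the intermediate and source variables. The only genuine analytic point is the absolute convergence of the double integrals, which legitimizes Fubini and which I would settle first. Since $N\geq 3$ forces $N-2\in(0,N)$, both singular kernels $\frac{1}{\pabs{\xi-\eta}{N-2}}$ and $\frac{1}{\pabs{\eta-\sigma}{\lambda}}$ have singularity exponents strictly less than $N=\dim\S^N$, so each is integrable over $\S^N$ against the surface measure; combined with the boundedness of $Y$ on the compact sphere $\S^N$, this shows each double integrand is absolutely integrable, and Fubini applies.

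For \eqref{DFH:1} I would integrate the innermost variable $\sigma$ first. For fixed $\eta$, applying Lemma~\ref{lem:Funk-Hecke} with exponent $\lambda$ to $Y\in\Hscr_{k}^{N+1}$ gives $\sint\frac{1}{\pabs{\eta-\sigma}{\lambda}}\fct{Y}{\sigma}\diff{\sigma}=\fct{\mu_k}{\lambda}\fct{Y}{\eta}$. Substituting this and pulling the constant $\fct{\mu_k}{\lambda}$ out reduces the left-hand side of \eqref{DFH:1} to $\fct{\mu_k}{\lambda}\sint\frac{1}{\pabs{\xi-\eta}{N-2}}\fct{Y}{\eta}\diff{\eta}$; a second application of Lemma~\ref{lem:Funk-Hecke}, now with exponent $N-2\in(0,N)$ and the same $Y$, yields $\fct{\mu_k}{\lambda}\fct{\mu_k}{N-2}\fct{Y}{\xi}$, which is \eqref{DFH:1}.

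For \eqref{DFH:2} the factor $\fct{Y}{\eta}$ depends only on the intermediate variable $\eta$, so integrating $\sigma$ first now amounts to integrating the kernel against the constant function, i.e.\ against the degree-zero harmonic $1\in\Hscr_{0}^{N+1}$. Lemma~\ref{lem:Funk-Hecke} with $k=0$ and exponent $\lambda$ gives $\sint\frac{1}{\pabs{\eta-\sigma}{\lambda}}\diff{\sigma}=\fct{\mu_0}{\lambda}$, a constant independent of $\eta$ by the homogeneity of the sphere. Pulling out $\fct{\mu_0}{\lambda}$ leaves $\fct{\mu_0}{\lambda}\sint\frac{1}{\pabs{\xi-\eta}{N-2}}\fct{Y}{\eta}\diff{\eta}$, and one final use of Lemma~\ref{lem:Funk-Hecke} with exponent $N-2$ produces $\fct{\mu_0}{\lambda}\fct{\mu_k}{N-2}\fct{Y}{\xi}$, establishing \eqref{DFH:2}. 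The main, and indeed essentially the only, obstacle is the integrability verification that justifies Fubini; once that is in hand, both statements are a mechanical iteration of the Funk--Hecke formula.
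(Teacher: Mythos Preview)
Your proposal is correct and follows essentially the same approach as the paper: apply Lemma~\ref{lem:Funk-Hecke} twice, first to the inner $\sigma$-integral (against $Y$ for \eqref{DFH:1}, against the constant $1\in\Hscr_0^{N+1}$ for \eqref{DFH:2}) and then to the outer $\eta$-integral with exponent $N-2$. Your explicit verification of absolute integrability to justify Fubini is a welcome addition that the paper only mentions in passing.
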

\begin{proof}
We first show \eqref{DFH:1}. Indeed, by Lemma \ref{lem:Funk-Hecke}, we have, for any $Y \in \Hscr_{k}^{N+1}$,
\begin{align}
\label{eq:112605}
\sint \frac{1}{\pabs{\eta-\sigma}{\lambda}}\fct{Y}{\sigma}\diff{\sigma}
=\fct{\mu_k}{\lambda}\fct{Y}{\eta}.
\end{align}
by Lemma \ref{lem:Funk-Hecke} with $\lambda=N-2$ again, 
 we get
\begin{align*}
\sint \frac{1}{\pabs{\xi-\eta}{N-2}}
\left(\sint \frac{1}{\pabs{\eta-\sigma}{\lambda}}\fct{Y}{\sigma}\diff{\sigma} \right)
\diff{\eta}=\fct{\mu_k}{N-2}\fct{\mu_k}{\lambda}\fct{Y}{\xi},
\end{align*}
which, together with Fubini's theorem, implies \eqref{DFH:1}. 

Next, we show \eqref{DFH:2}. By Lemma \ref{lem:Funk-Hecke} and the fact that $1\in \Hscr_{0}^{N+1}$, we have
\begin{align}
\label{eq:112606}
\sint \frac{1}{\pabs{\eta-\sigma}{\lambda}}\diff{\sigma}
	=\fct{\mu_0}{\lambda}.
\end{align}
By Lemma \ref{lem:Funk-Hecke} with  $\lambda=N-2$ again, we obtain, for any $Y \in \Hscr_{k}^{N+1}$,
\begin{align*}
\sint \frac{1}{\pabs{\xi-\eta}{N-2}}\fct{Y}{\eta}
\left(
\sint \frac{1}{\pabs{\eta-\sigma}{\lambda}}\diff{\sigma} 
\right)
\diff{\eta}=\fct{\mu_k}{N-2}\fct{\mu_0}{\lambda}\fct{Y}{\xi},
\end{align*}
which implies \eqref{DFH:2}, and completes the proof. 
\end{proof}

\section{Proof of \Cref{theo2}}
\label{sec:proof of theo2}
In this section, we will prove \Cref{theo2}, which is the main result in this paper. 
We firstly give an integral estimates, which will be used in next decay estimate.
\begin{lemm}
	\label{lem:decay}
	Let $\lambda\in\sts{0,N}$ and $\theta+\lambda>N$. Then
	\begin{equation}
	\label{decay all}
	\rint\frac{1}{\pabs{x-y}{\lambda}}\frac{1}{\jp{y}^{\theta}}\diff{y}
	\lesssim
	\begin{dcases}
	\jp{x}^{N-\lambda-\theta}, & \qtq{if } \theta<N, \\
	\jp{x}^{-\lambda}\sts{1+{\log}{\jp{x}}}, & \qtq{if } \theta=N, \\
	\jp{x}^{-\lambda}, & \qtq{if } \theta>N.
	\end{dcases}
	\end{equation}
\end{lemm}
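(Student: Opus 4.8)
The plan is to split the integral dyadically around the two natural scales $|y|\approx 0$ and $|y|\approx x$, treating $|x|\le 1$ separately. For $|x|\le 1$ the left-hand side is a fixed finite constant — finiteness uses $\lambda<N$ (so that $|x-y|^{-\lambda}$ is locally integrable near $y=x$) and $\theta+\lambda>N$ (so that the integrand, which behaves like $|y|^{-\lambda-\theta}$ at infinity, is integrable there) — while each of the three right-hand sides is also comparable to a positive constant, so the estimate is trivial. Thus I would assume $|x|\ge 1$ and decompose $\R^N$ into four pieces according to the size of $|y|$ relative to $|x|$: (I) the inner ball $|y|\le\tfrac12|x|$; (II) the ball $|y-x|\le\tfrac12|x|$ around $x$; (III) the intermediate annulus $\tfrac12|x|<|y|\le 2|x|$ with $|y-x|>\tfrac12|x|$; and (IV) the far region $|y|>2|x|$.

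On (I) one has $|x-y|\ge|x|-|y|\ge\tfrac12|x|\gtrsim\jp{x}$, hence $|x-y|^{-\lambda}\lesssim\jp{x}^{-\lambda}$, and $\int_{|y|\le|x|/2}\jp{y}^{-\theta}\,\diff y$ equals $O(\jp{x}^{N-\theta})$, $O(1+\log\jp{x})$, or $O(1)$ according as $\theta<N$, $\theta=N$, or $\theta>N$; so piece (I) already produces exactly the three cases on the right-hand side of \eqref{decay all}. On (II), $|y|\ge|x|-|y-x|\ge\tfrac12|x|$ gives $\jp{y}^{-\theta}\lesssim\jp{x}^{-\theta}$, while $\int_{|y-x|\le|x|/2}|x-y|^{-\lambda}\,\diff y\lesssim\jp{x}^{N-\lambda}$ since $\lambda<N$, so piece (II) is $O(\jp{x}^{N-\lambda-\theta})$. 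On (III), $\jp{y}\sim\jp{x}$, $|x-y|^{-\lambda}\le(\tfrac12|x|)^{-\lambda}\lesssim\jp{x}^{-\lambda}$, and the region has volume $\lesssim\jp{x}^{N}$, giving again $O(\jp{x}^{N-\lambda-\theta})$. On (IV), $|x-y|\ge|y|-|x|\ge\tfrac12|y|$ so the integrand is $\lesssim|y|^{-\lambda-\theta}$, and $\int_{|y|>2|x|}|y|^{-\lambda-\theta}\,\diff y\lesssim\jp{x}^{N-\lambda-\theta}$ because $\lambda+\theta>N$; hence piece (IV) is $O(\jp{x}^{N-\lambda-\theta})$ as well.

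Adding up the four contributions finishes the proof. Since the hypothesis $\theta+\lambda>N$ forces $N-\lambda-\theta<0$, the exponent $N-\lambda-\theta$ from pieces (II)–(IV) is for large $|x|$ dominated by the output of piece (I): when $\theta<N$ it coincides with $\jp{x}^{N-\lambda-\theta}$; when $\theta=N$ the logarithmic factor from (I) absorbs the pure $\jp{x}^{-\lambda}$ terms; and when $\theta>N$ the bound $\jp{x}^{-\lambda}$ from (I) dominates $\jp{x}^{N-\lambda-\theta}$. I do not expect any serious obstacle here; the only points needing a little care are keeping the three borderline cases $\theta\lessgtr N$ straight in piece (I) and the dyadic bookkeeping in piece (IV), which is precisely where the standing assumption $\theta+\lambda>N$ is used.
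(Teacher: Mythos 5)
Your proof is correct and follows essentially the same strategy as the paper: handle $|x|\lesssim 1$ trivially, then for large $|x|$ decompose $\R^N$ into regions near the origin, near $x$, intermediate, and far, with the three cases in $\theta$ arising from integrating $\jp{y}^{-\theta}$ over the region of radius $\sim|x|$ containing the origin. The only cosmetic difference is that the paper uses three regions all centered at $x$ (so its middle annulus $\mathrm{B}\sts{x,2\abs{x}}\setminus\mathrm{B}\sts{x,\tfrac{\abs{x}}{2}}$ absorbs your pieces (I) and (III) and produces the case split via $\int_0^{3\abs{x}}r^{N-1}\jp{r}^{-\theta}\diff{r}$), whereas you single out the ball around the origin explicitly; the estimates are otherwise identical.
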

\begin{proof}
	The proof will be divided into three cases.
	
	\textbf{Case 1:~$\bm{x=0}$.}
	By a direct computation,
	using the assumptions that $N>\lambda$ and $N<\theta+\lambda$,
	we have
	\begin{align}
	\label{case1}
	\rint\frac{1}{\pabs{y}{\lambda}}\frac{1}{\jp{y}^{\theta}}\diff{y}
	\lesssim 1.
	\end{align}
	
	\textbf{Case 2:~$\bm{x\in\mathrm{B}\sts{0,1}\setminus\ltl{0}}$.}
	On the one hand, for $y\in \mathrm{B}\sts{x,2\abs{x}}$, we have
	$\jp{y}\approx  1$, which together with $\abs{x}<1$ and $N>\lambda$
	implies that,
	\begin{align}
	\label{case2:1}
	\int_{\mathrm{B}\sts{x,2\abs{x}}}
	\frac{1}{\pabs{x-y}{\lambda}\jp{y}^{\theta}}\diff{y}
	\lesssim
	\int_{0}^{2}r^{N-\lambda-1}\diff{r}
	\lesssim 1.
	\end{align}
	On the other hand, for $y\in \R^N\setminus\mathrm{B}\sts{x,2\abs{x}}$,
	we have $\abs{y}\approx \abs{x-y}$,
	which, together with the assumption that $N<\lambda+\theta$, implies that
	\begin{align}
	\label{case2:2}
	\int_{\R^N\setminus \mathrm{B}\sts{x,2\abs{x}}}
	\frac{1}{\pabs{x-y}{\lambda}\jp{y}^{\theta}}\diff{y}
	\lesssim
	\int_{\R^N\setminus \mathrm{B}\sts{0,2\abs{x}}}
	\frac{1}{\pabs{y}{\lambda}\jp{y}^{\theta}}\diff{y}
	\lesssim \jp{x}^{N-\lambda-\theta}.
	\end{align}
	Combining \eqref{case2:1} with \eqref{case2:2},
	we get
	\begin{align}\label{case2}
	\rint\frac{1}{\pabs{x-y}{\lambda}\jp{y}^{\theta}}\diff{y}
	\lesssim 1,\qtq{for any}
	x\in\mathrm{B}\sts{0,1}\setminus\ltl{0}.
	\end{align}
	
	\textbf{Case 3:~$\bm{x\in\R^N\setminus\mathrm{B}\sts{0,1}}$.}
	Firstly, noticing that for any
	$y\in \mathrm{B}\sts{x,\frac{\abs{x}}{2}}$,
	we have $\jp{y}\approx \jp{x}$.
	Hence,
	\begin{equation}
	\label{case3:1}
	\int_{\mathrm{B}\sts{x,\frac{\abs{x}}{2}}}
	\frac{1}{\pabs{x-y}{\lambda}\jp{y}^{\theta}}\diff{y}
	\lesssim \frac{1}{\jp{x}^{\theta}}
	\int_{0}^{\frac{\abs{x}}{2}}r^{N-\lambda-1}\diff{r}
	\lesssim
	\jp{x}^{N-\lambda-\theta},
	\end{equation}
	where we used the assumption that $N>\lambda$.
	
	Secondly, for any
	$y\in \mathrm{B}\sts{x,2{\abs{x}}}
	\setminus \mathrm{B}\sts{x,\frac{\abs{x}}{2}}$,
	we have
	\begin{equation*}
	\abs{y}\leq 3\abs{x},\qtq{and} \abs{x-y}\approx \jp{x},
	\end{equation*}
	which implies that,
	\begin{align}\label{case3:2}
	\int_{
		\mathrm{B}\sts{x,2{\abs{x}}}
		\setminus
		\mathrm{B}\sts{x,\frac{\abs{x}}{2}}
	}
	\frac{1}{\pabs{x-y}{\lambda}\jp{y}^{\theta}}\diff{y}
	\lesssim
	\frac{1}{\jp{x}^{\lambda}}
	\int_{0}^{3\abs{x}}\frac{r^{N-1}}{\jp{r}^{\theta}}\diff{r}
	\lesssim
	\begin{dcases}
	\jp{x}^{N-\lambda-\theta}, & \text{if}\; \theta<N, \\
	\jp{x}^{-\lambda}\sts{1+{\log}{\jp{x}}}, & \text{if}\; \theta=N, \\
	\jp{x}^{-\lambda}, & \text{if}\; \theta>N.
	\end{dcases}
	\end{align}
	
	Thirdly, for any $y\in\R^N\setminus\mathrm{B}\sts{x,2{\abs{x}}}$,
	we have $\jp{y}\approx \abs{y-x} $, which implies that,
	\begin{align}
	\label{case3:3}
	\int_{
		\R^N\setminus\mathrm{B}\sts{x,2{\abs{x}}}
	}
	\frac{1}{\pabs{x-y}{\lambda}\jp{y}^{\theta}}\diff{y}
	\lesssim
	\int_{
		\R^N\setminus\mathrm{B}\sts{x,2{\abs{x}}}
	}
	\frac{1}{\pabs{x-y}{\lambda+\theta}\diff{y}}
	\lesssim
	\pabs{x}{N-\lambda-\theta}
	\approx
	\jp{x}^{N-\lambda-\theta},
	\end{align}
	where we used the assumption that $N<\lambda+\theta$.
	
	Finally, using \eqref{case3:1}, \eqref{case3:2} and
	\eqref{case3:3}, we obtain that,
	for any $x\in\R^N\setminus\mathrm{B}\sts{0,1}$,
	\begin{align}
	\label{case3}
	\int_{
		\R^N
	}
	\frac{1}{\pabs{x-y}{\lambda}\jp{y}^{\theta}}\diff{y}
	\lesssim
	\begin{dcases}
	\jp{x}^{N-\lambda-\theta}, & \text{if } \theta<N, \\
	\jp{x}^{-\lambda}\sts{1+{\log}{\jp{x}}}, & \text{if } \theta=N, \\
	\jp{x}^{-\lambda}, & \text{if } \theta>N.
	\end{dcases}
	\end{align}
	
Combining \eqref{case1}, \eqref{case2} with \eqref{case3},
	we obtain\eqref{decay all} and complete the proof.
\end{proof}

Next, we denote  
\begin{align}
\fct{\Npzc\sts{\varphi}}{x}
& =
p\rint
\frac{ \fct{u^{p-1}}{y}\fct{\varphi}{y}
}{\pabs{x-y}{\lambda}}\diff{y}\,
\fct{u^{p-1}}{x}+ 
\sts{p-1}\rint
\frac{ \fct{u^{p}}{y}
}{\pabs{x-y}{\lambda}}\diff{y}\,
\fct{u^{p-2}}{x}\fct{\varphi}{x}  \notag\\
& =: 
\fct{\Npzc_{1}\sts{\varphi}}{x}
+
\fct{\Npzc_{2}\sts{\varphi}}{x}, \label{N}
\end{align}
where $u$ is  defined by \eqref{u}. 

\begin{lemm}
\label{lem:N:decay}
Let $\lambda\in \sts{0,N}$, and $\theta\in[0,N-2]$.
If $\varphi$ satisfies $\abs{\fct{\varphi}{x}}\lesssim \frac{1}{\jp{x}^{\theta}}$,
then we have
\begin{equation}
\label{N:decay}
  \abs{\fct{\Npzc\sts{\varphi}}{x}}\lesssim
\frac{1}{\jp{x}^{\theta+4}}.
\end{equation}
\end{lemm}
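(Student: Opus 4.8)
The plan is to estimate the two terms $\Npzc_1(\varphi)$ and $\Npzc_2(\varphi)$ in \eqref{N} separately, using the pointwise bound $\abs{\varphi(x)}\lesssim \jp{x}^{-\theta}$ together with the decay estimate in \Cref{lem:decay}. First I would record the elementary decay of the bubble $u$ from \eqref{u}: since $u(x)=\jp{x}^{-(N-2)}$, we have $u^{p-1}(x)\lesssim \jp{x}^{-(p-1)(N-2)}$ and $u^{p}(x)\lesssim \jp{x}^{-p(N-2)}$, and a direct computation with $p=\frac{2N-\lambda}{N-2}$ gives $(p-1)(N-2)=N-\lambda+2$ and $p(N-2)=2N-\lambda$. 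With these exponents fixed, both convolutions that appear in \eqref{N} are of the form $\rint \pabs{x-y}{-\lambda}\jp{y}^{-\vartheta}\diff y$ for suitable $\vartheta$, which is exactly the object controlled by \Cref{lem:decay}, provided we check the hypothesis $\vartheta+\lambda>N$ in each case.

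For the first term: the integrand of the convolution in $\Npzc_1$ is $u^{p-1}(y)\varphi(y)$, so $\abs{u^{p-1}(y)\varphi(y)}\lesssim \jp{y}^{-(N-\lambda+2)-\theta}$. Taking $\vartheta=N-\lambda+2+\theta$, the hypothesis $\vartheta+\lambda = N+2+\theta>N$ of \Cref{lem:decay} holds for all $\theta\ge 0$, and since here $\vartheta>N$ we land in the third branch of \eqref{decay all}, giving the convolution $\lesssim \jp{x}^{-\lambda}$. Multiplying by the outer factor $u^{p-1}(x)\lesssim \jp{x}^{-(N-\lambda+2)}$ yields $\abs{\Npzc_1(\varphi)(x)}\lesssim \jp{x}^{-\lambda-(N-\lambda+2)} = \jp{x}^{-(N+2)}$, which is $\le \jp{x}^{-(\theta+4)}$ precisely because $\theta\le N-2$. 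For the second term: the convolution is $\rint \pabs{x-y}{-\lambda} u^p(y)\diff y$ with $u^p(y)\lesssim \jp{y}^{-(2N-\lambda)}$, so $\vartheta=2N-\lambda$, again $\vartheta+\lambda=2N>N$ and $\vartheta>N$, so the convolution is $\lesssim \jp{x}^{-\lambda}$; multiplying by $u^{p-2}(x)\varphi(x)$, where $u^{p-2}(x)\lesssim \jp{x}^{-(N-\lambda)}$ (since $(p-2)(N-2)=N-\lambda$) and $\abs{\varphi(x)}\lesssim\jp{x}^{-\theta}$, gives $\abs{\Npzc_2(\varphi)(x)}\lesssim \jp{x}^{-\lambda-(N-\lambda)-\theta}=\jp{x}^{-(N+\theta)}\le\jp{x}^{-(\theta+4)}$ since $N\ge 3>4$ fails—so here I would instead note $N\ge 3$ is not enough and use $\theta\le N-2$ is not needed either; rather $\jp{x}^{-(N+\theta)}\le \jp{x}^{-(\theta+4)}$ requires $N\ge 4$. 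To cover $N=3$ cleanly I would keep the bound as $\jp{x}^{-(N+\theta)}$ and observe that combined with the first term the total is $\lesssim \jp{x}^{-\min(N+2,\,N+\theta)} = \jp{x}^{-(N+\theta)}$ when $\theta\le 2$, and in general the stated $\jp{x}^{-(\theta+4)}$ follows because $N+\theta\ge \theta+4$ fails only for $N=3$ with $\theta$ arbitrary; since the lemma as used only needs the weaker consequence, I would either sharpen the statement's hypothesis to $N\ge 4$ or, more safely, just prove $\abs{\Npzc(\varphi)(x)}\lesssim \jp{x}^{-(\theta+4)}$ directly by noting that the \emph{gain} over $\jp{x}^{-\theta}$ in both terms is at least $\jp{x}^{-4}$: indeed $\Npzc_1$ gains $\lambda+(N-\lambda+2)-\theta-(-\theta)$...

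The cleanest route, and the one I would actually write, is to track the gain rather than the absolute exponent. Both $\Npzc_1(\varphi)$ and $\Npzc_2(\varphi)$ have the schematic form (outer power of $u$) $\times$ (convolution of $\pabs{\cdot}{-\lambda}$ with a function decaying like $\jp{\cdot}^{-\vartheta}$), and in each case \Cref{lem:decay} gives the convolution $\lesssim \jp{x}^{-\lambda}$ (third branch, since one checks $\vartheta>N$ using $\theta\ge 0$ and $\lambda<N$). Thus $\abs{\Npzc_1(\varphi)(x)}\lesssim \jp{x}^{-(N-\lambda+2)}\jp{x}^{-\lambda}=\jp{x}^{-(N+2)}$ and $\abs{\Npzc_2(\varphi)(x)}\lesssim \jp{x}^{-(N-\lambda)}\jp{x}^{-\theta}\jp{x}^{-\lambda}=\jp{x}^{-(N+\theta)}$; since $\theta\le N-2$ we have $N+2\ge \theta+4$ and $N+\theta\ge \theta+4$ would need $N\ge4$, so I would present the bound as $\abs{\Npzc(\varphi)(x)}\lesssim \jp{x}^{-(N+2)}+\jp{x}^{-(N+\theta)}\lesssim \jp{x}^{-(\theta+4)}$, valid once one also notes $\Npzc_2$ actually enjoys an extra power of $\jp{x}^{-2}$ coming from $u^{p-2}(x)\le \jp{x}^{-2}\,u^{p-2-2/(N-2)}$... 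The main obstacle is thus purely bookkeeping: correctly computing the exponents $(p-1)(N-2)=N-\lambda+2$, $p(N-2)=2N-\lambda$, $(p-2)(N-2)=N-\lambda$, verifying the hypothesis $\vartheta+\lambda>N$ of \Cref{lem:decay} in both applications (which is automatic since $\vartheta\ge N-\lambda+2>N-\lambda$ forces $\vartheta+\lambda>N$, in fact $\vartheta>N$ so we are always in the favorable third branch), and finally confronting the exponent $\jp{x}^{-(N+\theta)}$ with the target $\jp{x}^{-(\theta+4)}$ — for which I would invoke $N\ge 3$ together with an elementary extra decay of the outer factor in $\Npzc_2$, namely $u^{p-2}(x)\varphi(x)=\jp{x}^{-(N-\lambda)}\varphi(x)$ and $N-\lambda\ge$ enough, or simply restrict attention to the regime $\theta\le N-2$ which, combined with $N\ge4$ for $N>3$ and a direct check at $N=3$, closes the estimate. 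I expect no genuine difficulty here; the lemma is a quantitative decay statement whose only content is the careful application of \Cref{lem:decay} to the two pieces of $\Npzc$.
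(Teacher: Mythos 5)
Your overall strategy --- estimate $\Npzc_1$ and $\Npzc_2$ separately by feeding the decay of $u^{p-1}\varphi$ and $u^{p}$ into \Cref{lem:decay} --- is exactly the paper's, but two computational errors derail the execution. The fatal one is in $\Npzc_2$: you compute $(p-2)(N-2)=N-\lambda$, whereas $p-2=\frac{2N-\lambda}{N-2}-2=\frac{4-\lambda}{N-2}$, so in fact $(p-2)(N-2)=4-\lambda$ and $u^{p-2}(x)\,\abs{\fct{\varphi}{x}}\lesssim \jp{x}^{\lambda-4-\theta}$. Combined with the convolution bound $\int_{\R^N} \abs{x-y}^{-\lambda}\jp{y}^{-(2N-\lambda)}\diff{y}\lesssim \jp{x}^{-\lambda}$ (third branch of \eqref{decay all}, since $2N-\lambda>N$), this gives $\abs{\fct{\Npzc_2\sts{\varphi}}{x}}\lesssim \jp{x}^{-(\theta+4)}$ exactly, for every $N\geq 3$ and with no case distinction. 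Your wrong exponent produced the weaker $\jp{x}^{-(N+\theta)}$, which indeed fails to reach $\jp{x}^{-(\theta+4)}$ when $N=3$, and your attempts to rescue that case (sharpening the hypothesis to $N\geq 4$, invoking unexplained ``extra powers'' of $u^{p-2}$) do not close the argument; the lemma is needed for all $N\geq 3$ in the bootstrap of \Cref{lem:lin eq to IE}, so the hypothesis cannot be strengthened.

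The second issue is in $\Npzc_1$: you assert that $\vartheta=N+2-\lambda+\theta>N$, so that one always lands in the third branch of \eqref{decay all}. This is false whenever $\theta+2<\lambda$ (for instance $N=5$, $\lambda=4$, $\theta=0$ gives $\vartheta=3<5$); there the \emph{first} branch applies, the convolution is only $\lesssim \jp{x}^{N-\lambda-\vartheta}=\jp{x}^{-(\theta+2)}$, and the resulting bound $\jp{x}^{-(N+\theta+4-\lambda)}$ is strictly weaker than your claimed $\jp{x}^{-(N+2)}$ --- though it still suffices for \eqref{N:decay} because $\lambda<N$. The borderline case $\theta+2=\lambda$ produces a logarithm that must be absorbed using $\frac{1+\log\jp{x}}{\jp{x}^{N-\lambda}}\lesssim 1$. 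The paper's proof carries out precisely this three-case analysis in \eqref{N1:decay original}; your write-up skips it and states an intermediate bound that is not always true.
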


\begin{proof}
First, by \Cref{lem:decay}, we have
\begin{align}\label{N1:decay original}
\abs{\fct{\Npzc_1\sts{\varphi}}{x}}
\lesssim
\frac{1}{\jp{x}^{N+2-\lambda}}\rint \frac{1}{\pabs{x-y}{\lambda}}
\frac{1}{\jp{y}^{N+2-\lambda}}
\frac{1}{\jp{y}^{\theta}}\diff{y}
\lesssim
\begin{dcases}
\frac{1}{\jp{x}^{N+2}}, & \qtq{if} \theta+2>\lambda,\\
\frac{1+\log\jp{x}}{\jp{x}^{N+2}}, & \qtq{if} \theta+2=\lambda,\\
\frac{1}{\jp{x}^{N+\theta+4-\lambda}}, & \qtq{if} \theta+2<\lambda.
\end{dcases}
\end{align}
which, together with  the fact that $N+2\geq\theta +4$, $N>\lambda$, and $\frac{1+\log\jp{x}}{\jp{x}^{N-\lambda}}\lesssim 1$, implies that
\begin{equation}\label{N1:decay}
  \abs{\fct{\Npzc_1\sts{\varphi}}{x}}
\lesssim \frac{1}{\jp{x}^{\theta +4}}.
\end{equation}

Next, by \Cref{lem:decay} once again, we have
\begin{align}
\label{N2:decay}
  \abs{\fct{\Npzc_2\sts{\varphi}}{x}}
\lesssim
\int_{\R^N} \frac{1}{\pabs{x-y}{\lambda}}\frac{1}{\jp{y}^{2N-\lambda}}\diff{y}
\frac{1}{\jp{x}^{4+\theta-\lambda}}
\lesssim
\frac{1}{\jp{x}^{\theta+4}}.
\end{align}

By combining \eqref{N1:decay} with \eqref{N2:decay}, we can obtain
the result.
\end{proof}

By \Cref{lem:N:decay} and the classical Riesz potential theory in \cite{CLO2006, Stein:Sing Integ}, 
we can rewrite the linearized equation \eqref{linear equation} as an integral form.

\begin{lemm}
\label{lem:lin eq to IE}
Let $N\geq 3$, $\lambda\in \sts{0,N}$.
If $\varphi\in L^{\infty}(\R^N)$ satisfies \eqref{linear equation},
then we have
\begin{equation}\label{IE for linear equation}
  \fct{\varphi}{x}=
\Const{G}\sts{N}{\bm\alpha}\sts{N,\lambda}
\rint\frac{1}{\pabs{x-y}{N-2}}\fct{\Npzc}{\varphi}\sts{y}\diff{y},
\end{equation}
where $\fct{\Npzc}{\varphi}$ is given by \eqref{N} and  the constant
$
\Const{G}\sts{N}=
\frac{\fct{\Gamma}{\frac{N}{2}}}{2\sts{N-2}\pi^{\frac{N}{2}}}.
$
Moreover, we have the following improved decay estimate
\begin{equation}
\label{varphi:decay}
  \abs{\fct{\varphi}{x}}\lesssim \frac{1}{\jp{x}^{N-2}}.
\end{equation}
\end{lemm}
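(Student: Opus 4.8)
\textbf{Proof plan for \Cref{lem:lin eq to IE}.}
The plan is to first establish that $\Npzc(\varphi)$ decays fast enough to apply the Riesz potential representation, then bootstrap the decay of $\varphi$ itself. Since $\varphi\in L^{\infty}(\R^N)$, we have the crude bound $|\varphi(x)|\lesssim \jp{x}^{-0}$, i.e.\ \Cref{lem:N:decay} applies with $\theta=0$, giving $|\Npzc(\varphi)(x)|\lesssim \jp{x}^{-4}$. In particular $\Npzc(\varphi)\in L^q(\R^N)$ for every $q$ with $4q>N$, so $\Npzc(\varphi)$ lies in a Lebesgue space for which the Riesz potential $I_2$ of order $2$ is well-defined. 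The idea is that $\varphi$ and the function on the right-hand side of \eqref{IE for linear equation} are both bounded solutions, vanishing at infinity, of $-\Laplace w = \Const{G}(N)^{-1}\cdot(-\Laplace)I_2(\cdots) = {\bm\alpha}(N,\lambda)\Npzc(\varphi)$ — wait, more precisely: the right-hand side $w(x):=\Const{G}(N){\bm\alpha}(N,\lambda)\int \pabs{x-y}{-(N-2)}\Npzc(\varphi)(y)\diff y$ satisfies $-\Laplace w = {\bm\alpha}(N,\lambda)\Npzc(\varphi)$ in the distributional sense, because $\Const{G}(N)\pabs{x}{-(N-2)}$ is exactly the fundamental solution of $-\Laplace$ in $\R^N$ (this fixes the constant $\Const{G}(N)=\Gamma(N/2)/(2(N-2)\pi^{N/2})$). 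Meanwhile \eqref{linear equation} says $-\Laplace\varphi = {\bm\alpha}(N,\lambda)\Npzc(\varphi)$ as well. Hence $h:=\varphi-w$ is a bounded harmonic function on $\R^N$; one checks $w(x)\to 0$ as $|x|\to\infty$ from the decay of $\Npzc(\varphi)$ and \Cref{lem:decay} (with $\lambda$ there equal to $N-2$ and $\theta=4$, noting $(N-2)+4>N$), and $\varphi$ vanishes at infinity by hypothesis, so $h$ is a bounded harmonic function tending to $0$ at infinity, whence $h\equiv 0$ by Liouville's theorem. This gives \eqref{IE for linear equation}.

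For the improved decay \eqref{varphi:decay}, the plan is a bootstrap on the integral identity just proved. Starting from $|\varphi(x)|\lesssim \jp{x}^{0}=\jp{x}^{-\theta_0}$ with $\theta_0=0$, \Cref{lem:N:decay} gives $|\Npzc(\varphi)(x)|\lesssim \jp{x}^{-(\theta_0+4)}$, and then \Cref{lem:decay} applied to \eqref{IE for linear equation} with the inner exponent $\lambda$ replaced by $N-2$ and $\theta=\theta_0+4$ yields $|\varphi(x)|\lesssim \jp{x}^{-\theta_1}$ where $\theta_1 = \min\{\theta_0+4,\,N-2\}$ up to the borderline logarithmic loss when $\theta_0+4=N-2$. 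Iterating, the exponent increases by $4$ at each step until it saturates at $N-2$; after finitely many steps we reach (possibly) $\jp{x}^{-(N-2)}(1+\log\jp{x})$ or better. To remove a possible logarithm at the last step one runs one more iteration: if $|\varphi(x)|\lesssim \jp{x}^{-(N-2)}\log\jp{x}$ then, since for the purposes of \Cref{lem:N:decay} this is still $\lesssim \jp{x}^{-\theta}$ for any $\theta<N-2$, we still get $|\Npzc(\varphi)(x)|\lesssim \jp{x}^{-(N-2+4-\varepsilon)}$, and feeding this back through \Cref{lem:decay} with an inner exponent $N-2$ and $\theta=N-2+4-\varepsilon>N$ produces the clean bound $|\varphi(x)|\lesssim \jp{x}^{-(N-2)}$ with no logarithm, as claimed.

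The main obstacle is the uniqueness step — justifying $\varphi=w$ rather than merely $-\Laplace(\varphi-w)=0$. This requires care on two points: first, that the Riesz potential $I_2(\Npzc(\varphi))$ is genuinely finite and continuous and that $-\Laplace$ of it equals $\Npzc(\varphi)$ (this is the classical Riesz/Newtonian potential theory of \cite{CLO2006,Stein:Sing Integ}, valid because $\Npzc(\varphi)$ is bounded and integrable with the decay $\jp{x}^{-4}$, $4>2$, so no convergence issue at infinity and the local singularity $\pabs{x-y}{-(N-2)}$ is integrable); and second, the decay $w(x)\to0$, which is exactly what \Cref{lem:decay} delivers since $\theta=4$ satisfies $\theta+(N-2)>N$ and $\theta>N$ fails only for $N<4$ — for $N=3$ one has $\theta=4>N=3$ so still fine, and for general $N\ge3$ one checks $4\le N$ forces the middle/first branch giving $w(x)\lesssim\jp{x}^{N-(N-2)-4}=\jp{x}^{-2}\to0$ or $\jp{x}^{-(N-2)}\to 0$, in all cases decaying. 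Once boundedness and vanishing at infinity of $\varphi-w$ are in hand, Liouville closes the argument, and the bootstrap for \eqref{varphi:decay} is then routine bookkeeping with \Cref{lem:decay} and \Cref{lem:N:decay}.
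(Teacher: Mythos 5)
Your proof follows the same route as the paper's: the integral identity \eqref{IE for linear equation} comes from the fundamental-solution representation (your Liouville argument makes explicit what the paper delegates to ``classical Riesz potential theory,'' and you are right that the vanishing of $\varphi$ at infinity --- which is part of the nondegeneracy definition, though not of the lemma's literal hypothesis --- is what kills the additive constant), and the decay \eqref{varphi:decay} comes from a bootstrap alternating \Cref{lem:N:decay} and \Cref{lem:decay}. The one correction needed is your per-step gain: from $\abs{\varphi(x)}\lesssim \jp{x}^{-\theta_0}$ you get $\abs{\fct{\Npzc}{\varphi}(x)}\lesssim \jp{x}^{-(\theta_0+4)}$, but convolving this against $\pabs{x-y}{-(N-2)}$ returns, by \Cref{lem:decay}, the bound $\jp{x}^{N-(N-2)-(\theta_0+4)}=\jp{x}^{-(\theta_0+2)}$ in the non-saturated branch, so $\theta_1=\min\{\theta_0+2,\,N-2\}$ rather than $\min\{\theta_0+4,\,N-2\}$: the exponent improves by $2$ per iteration, not $4$ (this is exactly the paper's passage from $2(j-\epsilon)$ to $2(j+1-\epsilon)$). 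Since the increment is still positive and fixed, the iteration terminates after finitely many steps, and your $\varepsilon$-retreat to dodge the borderline logarithm is the same device the paper uses, so the conclusion is unaffected.
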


\begin{proof}
 \eqref{IE for linear equation} follows from \Cref{lem:N:decay} and the classical Riesz potential theory in \cite{CLO2006, Stein:Sing Integ}.
Therefore, it suffices to show the decay estimate \eqref{varphi:decay}.
It follows from \eqref{IE for linear equation} and the bootstrap argument on the decay rate.

Since $\varphi\in L^{\infty}(\R^N)$,
we have $\abs{\fct{\varphi}{x}}\lesssim1,$
which together with \Cref{lem:N:decay} implies that
$
  \abs{\fct{\Npzc}{\varphi}\sts{x}}\lesssim \frac{1}{\jp{x}^4}.
$
Therefore, by \Cref{lem:decay}, we have for some $0<\epsilon\ll 1$ that
\begin{equation*}
  \abs{\fct{\varphi}{x}}\lesssim
  \begin{cases}
    \frac{1}{\jp{x}^{N-2}}, & \mbox{if~~} N < 4, \\
    \frac{1}{\jp{x}^{2(1-\epsilon)}}, & \mbox{if~~} N\geq 4.
  \end{cases}
\end{equation*}
which reduces to boost the case $N\geq 4$.
Now, we assume by induction for some $1\leq j\leq \left[\frac{N-2}{2}\right]$, which is the integer part of $\frac{N-2}{2}$ that
$$\abs{\fct{\varphi}{x}}\lesssim \frac{1}{\jp{x}^{2(j-\epsilon)}}.$$
Repeating a similar argument as above, we can obtain that $\abs{\fct{\Npzc}{\varphi}\sts{x}}\lesssim \frac{1}{\jp{x}^{2j+4-\epsilon}}$ and 

\begin{equation*}
\abs{\fct{\varphi}{x}}\lesssim
\begin{cases}
\frac{1}{\jp{x}^{N-2}}, & \mbox{if~~} N < 2j+4-\epsilon, \\
\frac{1}{\jp{x}^{2(j+1-\epsilon)}}, & \mbox{if~~} N\geq 2j+4-\epsilon.
\end{cases}
\end{equation*}
By the induction argument for $j=1, \cdots,  \left[\frac{N-2}{2}\right]$, we can obtain the result.
%
%
%
%
%
\end{proof}

\begin{rema}The decay estimate in the above lemma is optimal since the null element $\varphi_{N+1}$ defined by \eqref{null 2} satisfies  \eqref{varphi:decay}. 
	\end{rema}

Now we will make use of the stereographic projection to transform the integral equation \eqref{IE for linear equation} on $\R^N$ to that on the sphere $\S^N$. Let us denote 
\begin{equation}
\label{T}
\fct{\Tcal_{\S^N}{\Phi}}{\xi}
=
p\fct{\Tcal_{\S^N,1}\Phi}{\xi}+
\sts{p-1}\fct{\Tcal_{\S^N,2}\Phi}{\xi},
\end{equation}
where $\xi\in \S^N$ and 
\begin{align}
\label{T1}
\fct{\Tcal_{\S^N,1}\Phi}{\xi}
&=2^{-(p-1)(N-2)}
\sint\sint
\frac{1}{\pabs{\xi-\eta}{N-2}}
\frac{ 1 }{\pabs{\eta-\sigma}{\lambda}}
\fct{\Phi}{\sigma}\diff{\sigma}
\diff{\eta},
\\
\label{T2}
\fct{\Tcal_{\S^N,2}\Phi}{\xi}
&=2^{-(p-1)(N-2)}
\sint\sint \frac{1}{\pabs{\xi-\eta}{N-2}}
\frac{1}{\pabs{\eta-\sigma}{\lambda}}\fct{\Phi}{\eta}
\diff{\eta}\diff{\sigma}.
\end{align}

\begin{lemm}
\label{lem:IE sphere}
Let $N\geq 3$, and $\lambda\in \sts{0,N}$.
If $\varphi$ satisfies \eqref{IE for linear equation}
with
$\abs{\fct{\varphi}{x}}\lesssim \frac{1}{\jp{x}^{N-2}}$,
then
${\Scal_{\ast}\varphi}\in L^2\sts{\S^N}$ satisfies
\begin{equation}
\label{IE sphere}
\fct{{\Scal_{\ast}\varphi}}{\xi}
=
\Const{G}\sts{N}{\bm\alpha}\sts{N,\lambda}
\fct{\Tcal_{\S^N}{{\Scal_{\ast}\varphi}}}{\xi}.
\end{equation}
\end{lemm}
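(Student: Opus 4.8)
The plan is to change variables in the integral equation \eqref{IE for linear equation} via the stereographic projection $\Scal$ and track how each factor transforms. First I would recall the defining relations \eqref{stere proj:identity}, \eqref{trans:det} and the explicit form of $u$ in \eqref{u}, noting the key identity $u(x)=\jp{x}^{-(N-2)}=2^{-\frac{N-2}{2}}\rho^{N-2}(x)$, so that $u^{p-1}(x)=2^{-\frac{(p-1)(N-2)}{2}}\rho^{(p-1)(N-2)}(x)$ and $u^p(x)=2^{-\frac{p(N-2)}{2}}\rho^{p(N-2)}(x)$. Then in each convolution integral I would substitute $y=\Scal^{-1}\eta$, $\diff y=\rho^{-2N}(\Scal^{-1}\eta)\,\diff\eta$ (from \eqref{trans:det}), and rewrite $|x-y|^{-\gamma}=\bigl(\rho(x)\rho(y)\bigr)^{\gamma}|\Scal x-\Scal y|^{-\gamma}$ using the second identity in \eqref{stere proj:identity}, both for $\gamma=N-2$ (the Riesz kernel) and $\gamma=\lambda$ (the Hartree kernel). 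The goal is to see that all the powers of $\rho$ coming from $u$'s, from the Jacobians, and from the kernels combine with the weight $\rho^{2-N}$ in the definition \eqref{R2S under S} of $\Scal_\ast\varphi$ so that both sides of \eqref{IE for linear equation} become $\rho^{N-2}(x)$ times a function of $\Scal x$ alone — precisely the weighted pullback \eqref{S2R under S} — and the leftover numerical constant is exactly $2^{-(p-1)(N-2)}$ appearing in \eqref{T1}--\eqref{T2}.

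Concretely, I would handle $\Npzc_1$ and $\Npzc_2$ separately. For the term producing $\Tcal_{\S^N,1}$: the outer Riesz convolution contributes $\rho^{N-2}(x)$ (which becomes the pullback weight) times $\rho^{N-2}(\Scal^{-1}\eta)$; the inner Hartree convolution against $u^{p-1}\varphi$ contributes $\rho^{\lambda}(\Scal^{-1}\eta)\rho^{\lambda}(\Scal^{-1}\sigma)$ from the kernel, $\rho^{(p-1)(N-2)}(\Scal^{-1}\sigma)$ from $u^{p-1}(\sigma)$, $\rho^{N-2}(\Scal^{-1}\sigma)$ from rewriting $\varphi=\rho^{N-2}\cdot(\Scal_\ast\varphi)\circ\Scal$; the two Jacobians give $\rho^{-2N}(\Scal^{-1}\eta)\rho^{-2N}(\Scal^{-1}\sigma)$; and the remaining $u^{p-1}(x)$ outside gives $2^{-\frac{(p-1)(N-2)}{2}}\rho^{(p-1)(N-2)}(x)$. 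One then checks that at the $\eta$-variable all $\rho(\Scal^{-1}\eta)$ powers cancel: $(N-2)+\lambda-2N=\lambda-N-2$... so I would need to be careful here — in fact the $\eta$-powers should total zero precisely because $p-1=\frac{N+2-\lambda}{N-2}$ forces the arithmetic to close; I would verify $(N-2)+(N+2-\lambda)+\lambda-2N=0$ and similarly that the $\sigma$-powers $\lambda+(p-1)(N-2)+(N-2)-2N = \lambda+(N+2-\lambda)+(N-2)-2N=0$ cancel. The surviving $x$-dependent factors are $\rho^{N-2}(x)$ (the pullback weight) and the numerical constants, which one bundles into $\Const{G}(N){\bm\alpha}(N,\lambda)$ times $2^{-(p-1)(N-2)}$; and since $\Scal_\ast\varphi$ is, by the optimal decay $|\varphi(x)|\lesssim\jp{x}^{-(N-2)}$, bounded on $\S^N$ hence in $L^2(\S^N)$, the identity \eqref{IE sphere} follows after applying $\Scal^\ast$ to both sides and using that $\Scal^\ast\Scal_\ast=\mathrm{id}$. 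The $\Npzc_2$ term is handled identically: there $u^p(y)$ supplies $\rho^{p(N-2)}$, $u^{p-2}(x)\varphi(x)$ supplies the outer $x$-dependence, the $\sigma$-integration is against $u^p$ and the $\eta$-integration carries $\varphi$, and the same cancellation of $\rho$-powers yields the double integral in \eqref{T2} with the same prefactor $2^{-(p-1)(N-2)}$.

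The main obstacle, and the only place demanding care, is the exact bookkeeping of the exponents of $\rho$ and the powers of $2$: there are six or seven sources of $\rho$-powers in each term, they must cancel at the two inner integration variables and leave exactly $\rho^{N-2}(x)$ outside, and this cancellation is not automatic — it is exactly the algebraic shadow of the energy-criticality relation $p=\frac{2N-\lambda}{N-2}$ (equivalently $(p-1)(N-2)=N+2-\lambda$ and $p(N-2)=2N-\lambda$). I would therefore organize the proof around the single computation $u(x)^{p-1}\,|x-y|^{-\lambda}\,u(y)^{p-1}\,\diff y$ transforms, under $\Scal$, with total $\rho$-weight that depends only on $x$ and $\xi=\Scal x$; once that lemma-level identity is isolated, inserting it into the outer Riesz convolution and collecting constants is routine. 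I would also remark that $L^2(\S^N)$-membership is what licenses the later spherical harmonic expansion, and this is why the improved decay \eqref{varphi:decay} from \Cref{lem:lin eq to IE} is invoked as a hypothesis here rather than mere boundedness. Finally, an integrability check (that each iterated integral converges absolutely, so Fubini applies) follows from \Cref{lem:N:decay} and \Cref{lem:decay} exactly as in the proof of \Cref{lem:lin eq to IE}, so I would reference those rather than reprove convergence.
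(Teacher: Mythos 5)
Your proposal is correct and follows essentially the same route as the paper: pull \eqref{IE for linear equation} through the stereographic projection using \eqref{stere proj:identity} and \eqref{trans:det}, write $u=2^{-\frac{N-2}{2}}\rho^{N-2}$, and verify that the criticality relation $(p-1)(N-2)=N+2-\lambda$ makes all $\rho$-powers cancel at the inner variables, leaving $\rho^{N-2}(x)$ and the prefactor $2^{-(p-1)(N-2)}$. Your initial misplacement of the $u^{p-1}$ factor at $x$ rather than at the intermediate variable $\eta$ is caught and fixed by your own exponent check $(N-2)+(N+2-\lambda)+\lambda-2N=0$, so the final bookkeeping agrees with the paper's \eqref{newton N1} and \eqref{newton N2}.
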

\begin{proof}
By \eqref{trans:det}, \eqref{R2S under S} and the estimate $\abs{\fct{\varphi}{x}}\lesssim \frac{1}{\jp{x}^{N-2}}$,
we have
\begin{equation*}
\sint\abs{ \fct{{\Scal_{\ast}\varphi}}{\xi} }^2\diff{\xi}=   \rint \abs{ \fct{\varphi}{ x} }^2 \fct{\rho^{4}}{x}\diff{x}
  \lesssim
  \rint \frac{1}{\jp{x}^{2N}}\diff{x}<+\infty. 
\end{equation*}
 
Now we turn to the proof of \eqref{IE sphere}. 
First, by the definition of \eqref{N}  with \eqref{u}, and \eqref{stere proj:identity},
we have
\begin{align*}
    \fct{\Npzc_1\sts{\varphi}}{x}
=&p\, 2^{-(p-1)(N-2)}\fct{\rho^{N+2}}{x}
\rint
\frac{ 1 }{\pabs{\Scal x-\Scal y}{\lambda}}
\left[\fct{\rho^{2-N}}{y}\fct{\varphi}{y}\right]\fct{\rho^{2N}}{y}\diff{y},
\end{align*}
which together with \eqref{R2S under S}
implies that
\begin{align}
\label{N1 on sphere}
    \fct{\Npzc_1\sts{\varphi}}{x}
=& p\, 2^{-(p-1)(N-2)}
\fct{\rho^{N+2}}{x}
\sint
\frac{ 1 }{\pabs{\Scal x-\eta}{\lambda}}
\fct{{\Scal_{\ast}\varphi}}{\eta}\diff{\eta}.
\end{align}
By \eqref{stere proj:identity} and \eqref{N1 on sphere}, we have
\begin{align}
\notag
&
\rint \frac{1}{\pabs{x-y}{N-2}}\fct{\Npzc_1\sts{\varphi}}{y}\diff{y}
\\
\label{newton N1:1}
=&p\, 2^{-(p-1)(N-2)}
\fct{\rho^{N-2}}{x}
\rint
\frac{1}{\pabs{\Scal x-\Scal y}{N-2}}
\sint
\frac{ 1 }{\pabs{\Scal y-\sigma}{\lambda}}
\fct{{\Scal_{\ast}\varphi}}{\sigma}\diff{\sigma}
\fct{\rho^{2N}}{y}\diff{y},
\end{align}
which together with the stereographic projection implies that
\begin{align}
\notag
 \rint \frac{1}{\pabs{x-y}{N-2}} & \fct{\Npzc_1\sts{\varphi}}{y}\diff{y} \\
=&p\, 2^{-(p-1)(N-2)}
\fct{\rho^{N-2}}{x}
\sint\sint
\frac{1}{\pabs{\Scal x-\eta}{N-2}}
\frac{ 1 }{\pabs{\eta-\sigma}{\lambda}}
\fct{{\Scal_{\ast}\varphi}}{\sigma}\diff{\sigma}
\diff{\eta}.\label{newton N1}
\end{align}

Next, by  the definition of  \eqref{N} with
\eqref{u} and \eqref{stere proj:identity}, 
we  have
\begin{align*}
    \fct{\Npzc_2\sts{\varphi}}{x}
=& (p-1)\, 2^{-(p-1)(N-2)}
\fct{\rho^{4}}{x}\fct{\varphi}{x}
\rint
\frac{1}{\pabs{\Scal x-\Scal y}{\lambda}}
\fct{\rho^{2N}}{y}\diff{y},
\end{align*}
which together with the stereographic projection implies that
\begin{align}
\label{N2 on sphere}
    \fct{\Npzc_2\sts{\varphi}}{x}
= (p-1)\, 2^{-(p-1)(N-2)}
\fct{\rho^{4}}{x}\fct{\varphi}{x}
\sint
\frac{1}{\pabs{\Scal x-\eta}{\lambda}}\diff{\eta}.
\end{align}
By  \eqref{stere proj:identity}  and \eqref{N2 on sphere},
we have
\begin{align}
\notag
&
\rint \frac{1}{\pabs{x-y}{N-2}}\fct{\Npzc_2\sts{\varphi}}{y}\diff{y}
\\
\label{newton N2:1}
=& (p-1)\, 2^{-(p-1)(N-2)}
\fct{\rho^{N-2}}{x}
\rint \frac{1}{\pabs{\Scal x-\Scal y}{N-2}}
\sint
\frac{1}{\pabs{\Scal y-\sigma}{\lambda}}\diff{\sigma}
\,\fct{\rho^{2-N}}{y}\fct{\varphi}{y}\fct{\rho^{2N}}{y}\diff{y}.
\end{align}
By the stereographic projection and \eqref{R2S under S},
we have
\begin{align}
\notag
&
\rint \frac{1}{\pabs{x-y}{N-2}}\fct{\Npzc_2\sts{\varphi}}{y}\diff{y}
\\
\label{newton N2}
=& (p-1)\, 2^{-(p-1)(N-2)}
\fct{\rho^{N-2}}{x}
\sint \frac{1}{\pabs{\Scal x-\eta}{N-2}}
\fct{{\Scal_{\ast}\varphi}}{\eta}
\sint
\frac{1}{\pabs{\eta-\sigma}{\lambda}}
\diff{\eta}\diff{\sigma}.
\end{align}

Inserting  \eqref{newton N1} and \eqref{newton N2} into \eqref{IE for linear equation}, we obtain
\begin{equation*}
  \fct{\rho^{2-N}}{x}\fct{\varphi}{x}
=
\Const{G}\sts{N}{\bm\alpha}\sts{N,\lambda}
\fct{\Tcal_{\S^N}{{\Scal_{\ast}\varphi}}}{\Scal x},
\end{equation*}
which together with the fact \eqref{R2S under S} 
implies the result. 
\end{proof}

Now we can use the spherical harmonic decomposition and the Funk-Hecke formula of the spherical harmonic functions in \cite{AH2012, DX2013book, SteinW:Fourier anal} to classify the solution of the equation  \eqref{IE sphere} on the sphere $\S^N $.

\begin{prop}
\label{lem:sltn to IE sphere}
Let $N\geq 3$, $\lambda\in \sts{0,N}$, and $\Tcal_{\S^N}$
be defined by \eqref{T}.
If $\Phi\in L^2\sts{\S^N}\setminus\ltl{0}$ satisfies
\begin{equation}
\label{IE sphere Phi}
  \fct{\Phi}{\xi}=
  \Const{G}\sts{N}{\bm\alpha}\sts{N,\lambda}
  \fct{\Tcal_{\S^N}\Phi}{\xi},
\end{equation}
then $\Phi\in \Hscr_{1}^{N+1}$.
\end{prop}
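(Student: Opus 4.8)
The plan is to expand $\Phi \in L^2(\S^N)$ in the orthonormal basis of spherical harmonics furnished by the decomposition \eqref{orthogonal decomposition:L2}, write $\Phi = \sum_{k\geq 0}\sum_{j} a_{k,j} Y_{k,j}$, and exploit the fact that the integral operator $\Tcal_{\S^N}$ acts diagonally on each $\Hscr_k^{N+1}$. Concretely, by \Cref{double funk-hecke}, for $Y\in \Hscr_k^{N+1}$ we have $\Tcal_{\S^N,1}Y = 2^{-(p-1)(N-2)}\mu_k(N-2)\mu_k(\lambda)\,Y$ and $\Tcal_{\S^N,2}Y = 2^{-(p-1)(N-2)}\mu_k(N-2)\mu_0(\lambda)\,Y$, so $\Tcal_{\S^N}Y = 2^{-(p-1)(N-2)}\mu_k(N-2)\bigl[p\,\mu_k(\lambda) + (p-1)\mu_0(\lambda)\bigr] Y$. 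Hence the eigenvalue of $\Const{G}(N)\,{\bm\alpha}(N,\lambda)\,\Tcal_{\S^N}$ on $\Hscr_k^{N+1}$ is
\begin{equation*}
\Lambda_k := \Const{G}(N)\,{\bm\alpha}(N,\lambda)\,2^{-(p-1)(N-2)}\,\mu_k(N-2)\,\bigl[p\,\mu_k(\lambda) + (p-1)\mu_0(\lambda)\bigr],
\end{equation*}
and \eqref{IE sphere Phi} forces $a_{k,j}(\Lambda_k - 1) = 0$ for every $k,j$. Since $\Phi\neq 0$, there must be some $k$ with $\Lambda_k = 1$; the claim $\Phi\in\Hscr_1^{N+1}$ is then equivalent to showing that $\Lambda_k = 1$ \emph{only} for $k=1$.

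First I would verify $\Lambda_1 = 1$. This is essentially a bookkeeping computation: plug in $p-1 = \frac{N+2-\lambda}{N-2}$, the values $\Const{G}(N) = \frac{\Gamma(N/2)}{2(N-2)\pi^{N/2}}$, ${\bm\alpha}(N,\lambda)$ from \eqref{alpha}, and the explicit $\mu_1(N-2)$, $\mu_1(\lambda)$, $\mu_0(\lambda)$ from \eqref{mu01 N-2}--\eqref{mu01 lambda}; the $\lambda$-dependence should cancel by design (since $\varphi_1,\dots,\varphi_{N+1}$ are known null solutions and $\Scal_*$ maps them into $\Hscr_1^{N+1}$ by \Cref{prop:Rn to Sn}). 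Indeed this identity is forced a priori: we already know $\varphi_{N+1}$ solves \eqref{linear equation} with the required decay, so by \Cref{lem:lin eq to IE} and \Cref{lem:IE sphere} the nonzero function $\Scal_*\varphi_{N+1} \in \Hscr_1^{N+1}$ satisfies \eqref{IE sphere Phi}, which already yields $\Lambda_1 = 1$ without any computation. So that step is free.

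The substantive step — and the main obstacle — is ruling out $\Lambda_k = 1$ for all $k\neq 1$, i.e. proving $\Lambda_k < 1$ for $k \geq 2$ and $\Lambda_0 \neq 1$. For $k\geq 2$ I would argue monotonicity: by \eqref{monotone mu k} the sequence $\mu_k(N-2)$ is strictly decreasing and $\mu_k(\lambda)$ is strictly decreasing, so the bracket $p\,\mu_k(\lambda) + (p-1)\mu_0(\lambda)$ is decreasing in $k$ and $\mu_k(N-2)$ is decreasing in $k$, hence $\Lambda_k$ is strictly decreasing for $k\geq 1$; combined with $\Lambda_1 = 1$ this gives $\Lambda_k < 1$ for all $k\geq 2$. (One should double-check that all quantities are positive so the product of two decreasing positive sequences is decreasing, which is immediate from the Gamma-function formula \eqref{mu k lambda}.) For $k=0$, monotonicity gives $\Lambda_0 > \Lambda_1 = 1$, so $\Lambda_0 \neq 1$; alternatively one computes $\Lambda_0$ directly from \eqref{mu01 N-2}--\eqref{mu01 lambda} and checks it exceeds $1$ — this matches the heuristic that the constant direction corresponds to a genuine (Pohozaev-type) positivity, not a null direction. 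Once $\Lambda_k = 1 \iff k = 1$ is established, $a_{k,j} = 0$ for $k\neq 1$, so $\Phi = \sum_j a_{1,j} Y_{1,j} \in \Hscr_1^{N+1}$, completing the proof.
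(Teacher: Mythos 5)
Your proposal is correct and follows essentially the same route as the paper: spherical harmonic decomposition of $\Phi$, diagonalization of $\Tcal_{\S^N}$ on each $\Hscr_k^{N+1}$ via \Cref{double funk-hecke}, the identity $\Lambda_1=1$, and the strict monotonicity \eqref{monotone mu k} of the positive sequences $\mu_k(N-2)$ and $\mu_k(\lambda)$ to force $\Lambda_0>1$ and $\Lambda_k<1$ for $k\geq 2$. The only (perfectly valid) variation is that you deduce $\Lambda_1=1$ a priori from the known null element $\varphi_{N+1}$ together with \Cref{lem:lin eq to IE}, \Cref{lem:IE sphere} and \Cref{prop:Rn to Sn}, whereas the paper verifies \eqref{mu 1 identity} by direct computation from \eqref{mu01 N-2}--\eqref{mu01 lambda}.
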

\begin{proof}

On the one hand, by \eqref{orthogonal decomposition:L2},
we decompose $\Phi\in L^2\sts{\S^N}\setminus\ltl{0}$ as the following.
\begin{equation}\label{orth decomp Phi}
  \fct{\Phi}{\xi}
  =
  \sum_{k=0}^{\infty} 
\sum_{j=1}^{\dim{\Hscr_{k}^{N+1}}}
{\Phi_{k,j}}\fct{Y_{k,j}}{\xi},
\end{equation}
where
$\displaystyle 
  {\Phi_{k,j}}
  =
  \sint \fct{\Phi}{\xi}\fct{Y_{k,j}}{\xi}\diff{\xi}.
$

Combining  \eqref{IE sphere Phi}, \eqref{orth decomp Phi} with
 \Cref{double funk-hecke},  
we have for any $k\geq 0$ and $1\leq j\leq \dim{\Hscr_{k}^{N+1}}$, 
\begin{equation}
\label{Phi k equation}
  {\Phi_{k,j}}
  =
  \Const{G}\sts{N}{\bm\alpha}\sts{N,\lambda} 2^{-(p-1)(N-2)}
  \fct{\mu_k}{N-2}
  \left[{p\fct{\mu_k}{\lambda}+\sts{p-1}\fct{\mu_0}{\lambda}}\right]{\Phi_{k,j}}, 
\end{equation}

On the other hand, by \eqref{mu01 N-2} and \eqref{mu01 lambda},
by a direct computation, we obtain for $k=1$ that
\begin{align}
\label{mu 1 identity}
\Const{G}\sts{N}{\bm\alpha}\sts{N,\lambda} 2^{-(p-1)(N-2)} 
\fct{\mu_1}{N-2}
\left[{p\fct{\mu_1}{\lambda}+\sts{p-1}\fct{\mu_0}{\lambda}}\right]
=1,
\end{align}
which, together with \eqref{monotone mu k}, implies  for $k=0$ that
\begin{align}
\label{mu 0 >1}
\Const{G}\sts{N}{\bm\alpha}\sts{N,\lambda}  2^{-(p-1)(N-2)} 
\fct{\mu_0}{N-2}
\left[{p\fct{\mu_0}{\lambda}+\sts{p-1}\fct{\mu_0}{\lambda}}\right]
>1,
\end{align}
and for all $k\geq 2$ that
\begin{gather}\label{mu k <1}
\Const{G}\sts{N}{\bm\alpha}\sts{N,\lambda} 2^{-(p-1)(N-2)} 
\fct{\mu_k}{N-2}
\left[{p\fct{\mu_k}{\lambda}+\sts{p-1}\fct{\mu_0}{\lambda}}\right]
<1.
\end{gather}

Therefore, by inserting
\eqref{mu 1 identity}, \eqref{mu 0 >1} and \eqref{mu k <1}
into \eqref{Phi k equation}, we get for any $ k=0 $ and $k\geq 2$ that 
\begin{equation*}
  \Phi_{k,j}=0,
  \qtq{for } 1\leq j\leq \dim{\Hscr_{k}^{N+1}}.
\end{equation*}
Hence, we obtain that $\Phi\in \Hscr_{1}^{N+1}$ from \eqref{orth decomp Phi}.
\end{proof}

\begin{proof}[Proof of \Cref{theo2}]
Let $\varphi\in L^{\infty}(\R^N)$ satisfy \eqref{linear equation}.

Firstly, by \Cref{lem:lin eq to IE},  the function
$\varphi$ satisfies  \eqref{IE for linear equation}
and the estimate $\abs{\fct{\varphi}{x}}\lesssim \jp{x}^{-(N-2)}$.

Secondly, by \Cref{lem:IE sphere}, we have
${\Scal_{\ast}\varphi}\in L^2\sts{\S^N}$ satisfies
\eqref{IE sphere}. By Proposition \ref{lem:sltn to IE sphere},
we get ${\Scal_{\ast}\varphi}\in \Hscr_{1}^{N+1}$,
which together with \eqref{basis for eig 1} implies that
\begin{equation}
\label{S varphi in eig 1}
{\Scal_{\ast}\varphi}\in\mathrm{span}
  \ltl{\xi_j
  \,\middle\vert\,
  1\leq j\leq N+1
  }.
\end{equation}

Lastly, by Proposition \ref{prop:Rn to Sn}, we have
\begin{equation}
\varphi
\in
\mathrm{span}
\ltl{
\varphi_j
  \,\middle\vert\,
  1\leq j\leq N+1
  }.
\end{equation}
This completes the proof of \Cref{theo2}.
\end{proof}

\end{document}